\def\supp{\operatorname{supp}}
\def\reg{\operatorname{reg}}
\def\max{\operatorname{max}}
\DeclarePairedDelimiter\ev{\langle}{\rangle}
\newcommand{\m}{\mathfrak m}
\newcommand{\NN}{\mathbb{N}}
\newcommand{\PP}{\mathcal{P}}
\newtheorem{lemma}{Lemma}[section]
\newtheorem{corollary}[lemma]{Corollary}
\newtheorem{theorem}[lemma]{Theorem}
\newtheorem{proposition}[lemma]{Proposition}
\newtheorem{definition}[lemma]{Definition}
\newtheorem{remark}[lemma]{Remark}
\newtheorem{example}[lemma]{Example}
\newtheorem{conjecture}[lemma]{Conjecture}
\begin{document}
	
	\pagenumbering{arabic}
	
	\title[Componentwise Linear Weighted Oriented Edge Ideals]{Componentwise linearity of edge ideals of weighted oriented Graphs} 
	
	\author[Manohar Kumar]{Manohar Kumar$^a$}
	\address{Department of Mathematics, Indian Institute of Technology
		Kharagpur, West Bengal, INDIA - 721302.}
	\email{kumarmanohar.maths@gmail.com}
	
	\author[Ramakrishna Nanduri]{Ramakrishna Nanduri$^{b}$}
	\address{Department of Mathematics, Indian Institute of Technology
		Kharagpur, West Bengal, INDIA - 721302.}
  \email{nanduri@maths.iitkgp.ac.in} 
	
	\author[Kamalesh Saha]{Kamalesh Saha$^c$}
	\address{Department of Mathematics, Chennai Mathematical Institute, Chennai, INDIA - 603103.}
	\email{ksaha@cmi.ac.in}
	
	\thanks{$^a$ Supported by PMRF fellowship, India}
	\thanks{$^b$ Supported by SERB grant No: CRG/2021/000465, India}
	\thanks{$^c$ Supported by NBHM Postdoctoral Fellowship, India}
	\thanks{AMS Classification 2020: 13D02, 05E40, 05E99, 13D45}

	\maketitle
	
	\begin{abstract}
		In this paper, we study the componentwise linearity of edge ideals of weighted oriented graphs. We show that if $D$ is a weighted oriented graph whose edge ideal $I(D)$ is componentwise linear, then the underlying simple graph $G$ of $D$ is co-chordal. This is an analogue of Fr\"oberg's theorem for weighted oriented graphs. We give combinatorial characterizations of componentwise linearity of $I(D)$ if $V^+$ are sinks or $\vert V^+ \vert\leq 1$. Furthermore, if $G$ is chordal or bipartite or $V^+$ are sinks or $\vert V^+ \vert\leq 1$, then we show the following equivalence for $I(D)$:  
  \begin{center}
  Vertex splittable $\Longleftrightarrow$ Linear quotient $\Longleftrightarrow$  Componentwise linear.  
\end{center}
	\end{abstract}
\section{Introduction}

The study of ideals having linear resolution has paid lots of attention among researchers in commutative algebra. These ideals behave nicely and computationally simple than others. One of
the remarkable results due to Eagon-Reiner says that the Stanley-Reisner ideal of a simplicial complex is Cohen-Macaulay if and only if its Alexander dual has a linear resolution \cite{ev98}. As a generalization of ideals having linear resolution, in 1999, Herzog and Hibi \cite{hh99} defined the ``next best" class of ideals, known as the componentwise linear ideals. Let $I\subseteq R=\mathbb{K}[x_1,\ldots,x_n]= \bigoplus_{d\geq 0} R_{d}$ be a graded ideal, where $\mathbb{K}$ is a field. We denote the ideal generated by all degree $d$ elements in $I$ by $I_{\ev{d}}$, i.e., $I_{\ev{d}}:=(I_{d})$. Then the ideal $I$ is said to be componentwise linear if $I_{\ev{d}}$ has a linear resolution for each $d\geq 0$.\par 

Componentwise linear ideals, a more general class including ideals having linear resolutions, have become popular in commutative algebra due to their nice properties and algebraic interpretation. Herzog and Hibi \cite{hh99} extend the Eagon-Reiner theorem by proving that a Stanley-Reisner ideal $I$ is sequentially Cohen-Macaulay if and only if its Alexander dual $I^{\vee}$ is componentwise linear. One of the motivations to study componentwise linear ideals comes from the literature of Koszul algebras and Koszul modules. Specifically, the ideal $I$ is Koszul if and only if $I$ is componentwise linear ideals (see \cite{hi05}). Some references on the theory of componentwise linearity are \cite{hh99}, \cite{hhm22}, \cite{hv22}, \cite{nms21}, \cite{nr15}.\par

In this paper, we study the componentwise linearity of edge ideals of weighted oriented graphs. A {\em weighted oriented graph} is a triplet $D=(V(D), E(D), w)$, where $V(D)$ is the vertex set of $D$, $E(D)=\{(x,y) |\mbox{ there is a directed edge from vertex $x$ towards vertex $y$}\}$ is the {\it edge set} of $D$, and $w:V(D)\rightarrow \NN$ is a map, called weight function, i.e. a weight $w(x)$ is assigned to each vertex $x\in V(D)$. Note that $D$ has no self-edges and no loop edges. Corresponding to a weighted oriented graph $D$, there is a simple graph $G$, called the underlying graph of $D$, such that $V(G):=V(D)$ and $E(G):=\{\{x,y\}| (x,y) \mbox { or } (y,x)\in E(D)\}$, i.e. $G$ is the simple graph without orientation and weights in $D$. Let $D$ be a weighted oriented graph with $V(D)=\{x_1,\ldots,x_n\}$. Then the {\it edge ideal} of $D$, denoted by $I(D)$, is an ideal of $R$ defined as follows
\begin{equation*}
    I(D)=(x_ix_j^{w(x_j)}~|~(x_i,x_j)\in E(D)).
\end{equation*}

The motivation for studying edge ideals of oriented graphs comes from coding theory, in particular, in the study of Reed-Muller-type codes (see \cite{hlmrv19,prt19} ). These ideals appear as the initial ideals of certain vanishing ideals in the theory of Reed-Muller-type codes. The study of weighted oriented edge ideals help to compute and estimate some basic parameters of such codes. Since the study of edge ideals of simple graphs itself is a landmark in commutative algebra and combinatorics, people try to generalize those theories for edge ideals of weighted directed graphs, which is a much bigger class. So far, a significant amount of research has been done concerning algebraic invariants and properties of these ideals (see \cite{gmsv18}, \cite{hlmrv19}, \cite{prt19}, \cite{mp21}, \cite{x21}, \cite{kblo22}, \cite{sg22}, \cite{bds23}, \cite{kn23}, \cite{kn123}). \par 

One of the most celebrated results in the study of edge ideals of simple graphs is Fr\"{o}berg's theorem \cite{f90}, which states that the edge ideal of a simple graph $G$, denoted by $I(G)$, has a linear resolution if and only if the complement of $G$ is chordal. If $D$ is a weighted oriented graph with underlying simple graph $G$, then $I(D)=I(G)$ if and only if $w(x)=1$ for all $x\in V(D)$. Recently, in \cite{bds23}, the authors showed that if $I(D)\neq I(G)$ and all weights of $D$ are equal, then $I(D)$ has linear resolution if and only if $D$ is a weighted oriented star graph such that all the edges are oriented towards the root of $D$. Thus, for non-trivial weighted oriented graphs, the class of ideals having linear resolutions is very limited. One can not simply generalize or extend characterizations of componentwise linear simple graphs to weighted oriented graphs because edge ideals of weighted oriented graphs are not square-free, not equigenerated and depend on the orientation and weights. 

Since the weighted oriented edge ideals need not be equigenerated, a question naturally arises concerning the componentwise linearity of these ideals. Also, one can ask what may be an analogue of Fr\"{o}berg's theorem for weighted oriented edge ideals. In \cite{sg22}, it has been proved that if $I(D)^{\vee}$ is Cohen-Macaulay, then $I(G)$ has a linear resolution. In this paper, we try to address these questions. The paper is written in the following fashion.\par 

In Section \ref{secpreli}, we recall some definitions, notions and results associated with our work. In section \ref{seccochordal}, we extend Fr\"{o}berg's theorem for edge ideals of weighted oriented graphs in one direction (necessary condition). In particular, we have shown in Theorem \ref{thmcochordal} that if $I(D)$ is componentwise linear, then the underlying simple graph $G$ is co-chordal (i.e., the complement of $G$ is chordal). In general, if a monomial ideal $I$ is componentwise linear, then the radical of $I$ need not be componentwise linear (see Example \ref{radical}). But, from our result, it follows that if $I(D)$ is componentwise linear, then its radical $I(G)$ has a linear resolution. Next, to find the sufficient condition of componentwise linearity for weighted oriented edge ideals, we investigate the linear quotient and vertex splitting properties of these ideals in section \ref{secvertexsplit}. The concept of linear quotient ideals was introduced by Herzog and Takayama \cite{ht02} to study resolutions by mapping cones. On the other hand, as a dual of vertex decomposable simplicial complexes, Moradi and Khosh-Ahang \cite{mka16} defined the vertex splittable ideals. From \cite[Theorem 8.2.15]{hh11}  and \cite[Theorem 2.4]{mka16}, we have
\begin{center}
    Vertex splittable $\implies$ Linear quotient $\implies$ Componentwise linear. 
\end{center}
The reverse implications need not be true because componentwise linearity depends on the characteristic of the field (for example, see \cite[Remark 3]{reisner76}), but vertex splittable and linear quotient properties are field-independent. The equivalency of the above implication is known for very few classes of ideals, namely, polymatroidal ideals \cite{ bh13}, edge ideals of simple graphs \cite{mka16}, etc. In Section \ref{secvertexsplit}, we have shown some large classes of weighted oriented edge ideals satisfy the equivalency of the relation hold. Specifically,  when $V^+$ are sinks (Theorem \ref{thmsink}) or $|V^+|=1$ (Theorem \ref{thmcardinality}) or when the underlying graph $G$ is bipartite (Theorem \ref{thmbipartite}) or chordal (Theorem \ref{thmchordal})  (where $V^{+}=\{x\in V(D)\mid w(x)>1\}$), the following equivalent relation holds: 
\begin{center}
  Vertex splittable $\Longleftrightarrow$ Linear quotient $\Longleftrightarrow$ Componentwise linear.  
\end{center}
Also, we give a combinatorial characterization of componentwise linear weighted oriented edge ideals when $V^+$ are sinks or $\vert V^{+}\vert \leq1 $. From our results and some computational evidence, we end up with Conjecture \ref{conjcl}.

\section{Preliminaries}\label{secpreli}
In this section, we recall the necessary prerequisites which are used to describe our work and establish our results.\par 

Let $D=(V(D), E(D), w)$ be a weighted oriented graph with $V(D)=\{x_1,\ldots,x_n\}$ and underlying simple graph $G$. Let $\mathfrak{m}=(x_1,\ldots,x_n)$ be the homogeneous maximal ideal of $R=\mathbb{K}[x_1,\ldots,x_n]$, where $\mathbb{K}$ is a field. We write $V^+(D):= \{x\in V(D): w(x)>1\}$, in short $V^{+}(D)$ is denoted by $V^+$. For a vertex $x\in V(D)$, its {\it outer neighbourhood} is defined as 
  $\mathcal{N}_{D}^{+}(x):= \{y \in V(D) | (x,y)\in E(D)\}$, its {\it inner neighbourhood} is defined as $\mathcal{N}_{D}^{-}(x):= \{z \in V(D) | (z,x)\in E(D)\}$, and $\mathcal{N}_D(x):=\mathcal{N}_{D}^{+}(x) \cup \mathcal{N}_{D}^{-}(x)=\mathcal{N}_{G}(x)$. Also, we denote $\mathcal{N}_D[x]:=\mathcal{N}_D(x) \cup \{x\}=\mathcal{N}_{G}[x]$. A vertex $x\in V(D)$ is called a {\it source} if $\mathcal{N}_{D}^{-}(x)=\emptyset$ and $x$ is called a {\it sink} if $ \mathcal{N}_{D}^{+}(x) = \emptyset $. If $x\in V(D)$ is a source, then we may assume $w(x)=1$ as it would not affect the ideal $I(D)$. By $G^c$, we denote the complement of the simple graph $G$. For $A \subseteq V(G)$, we denote $G[A]$ the induced subgraph of $G$ on the set of vertices $A$. By $G\setminus x$, we mean the induced subgraph $G[V(G)\setminus x]$ of $G$. Also, we denote $H(I(D)_{\ev 2})$ as a subgraph (need not be induced subgraph) $H$ of the simple graph $G$ such that $I(H)=I(D)_{\ev 2}$, where $G$ is the underlying simple graph of $D$. \par
  
 Now let us discuss some notations and definitions regarding simple graphs. For a subset $A\subseteq V(G)$, we denote the induced subgraph of $G$ on $A$ by $G[A]$. If $x\in V(G)$ is a vertex, then we write $G\setminus x$ to denote the graph $G[V(G)\setminus \{x\}]$. We use similar notations for weighted oriented graphs also. A graph $G$ is said to be a {\it complete graph} if there is an edge between every pair of vertices of $G$ and a complete graph on $n$ vertices is denoted by $K_{n}$. A {\it clique} of a graph $G$ is a set $A$ of vertices of $G$ such that $G[A]$ is a complete graph. A vertex $x\in V(G)$ is said to be a {\it simplicial vertex} of $G$ if $G[N[x]]$ is a complete graph. A subset $B \subseteq V(G)$ is called an {\it independent set} if no two vertices in $B$ are adjacent in $G$. A set of vertices $C$ of $G$ is called a {\it vertex cover} of $G$ if $C\cap e\neq \emptyset$ for all $e\in E(G)$. A {\it minimal vertex cover} is a vertex cover which is minimal with respect to inclusion. By $G^c$, we denote the complement graph of $G$.

\begin{definition}{\rm
A cycle of length $n$, denoted by $C_n$, is a connected graph having exactly $n$ vertices and $n$ edges. A simple graph $G$ is called {\it chordal} if $G$ has no cycle of length greater than three.
A graph $G$ is called {\it co-chordal} if $G^c$ is chordal. A graph $G$ is said to be {\it bipartite} if it has no induced odd cycle.
}
\end{definition}

\begin{definition}{\rm For any homogeneous ideal $I$ in $R$, the Castelnuovo-Mumford regularity (or simply regularity), denoted by $\reg(I)$, is defined as follows 
 \begin{align*}
 \reg(I) &=  \max \{j - i \mid \beta_{i,j}(I) \neq 0\} \\
         &= \max\{j+i \mid H_{\m}^i(I)_j \neq 0\},   
 \end{align*}
  where $\beta_{i,j}(I)$ is the $(i,j)^{th}$ graded Betti number of $I$ and $H_{\m}^i(I)_j$ denotes the $j^{th}$ graded component of the $i^{t h}$ local cohomology module $H_{\m}^i(I)$. 
  }
\end{definition}
 
\begin{definition}{\rm
   A homogeneous ideal $I$ is $R$ is componentwise linear if for each $d\geq 0$, the ideal generated by $d^{th}$ homogeneous component $I_d$, denoted by $I_{\ev{d}}$, has $d$-linear resolution over $R$, i.e. $\reg(I_{\langle d \rangle})=d$. Note that if $I$ is equigenerated, then linearity and componentwise linearity are equivalent.
   }
\end{definition}
 
\begin{definition}{\rm
    A monomial ideal $I \subseteq R$ has linear quotient property if there exists an order $ u_1 < \cdots < u_m$ on the minimal monomial generating set $\mathcal{G}(I)$ of $I$ such that the colon ideal $((u_1, \ldots, u_{i-1}) : u_i)$ is generated by a subset of the variables, for $i = 2, \ldots, m$.
    }
\end{definition}

Now, let us state the well-known Fr\"{o}berg's theorem \cite{f90}, which has been used frequently in this paper.\medskip

\noindent{\bf Fr\"{o}berg's Theorem.} Let $G$ be a simple graph and $I(G)$ be its edge ideal. Then $I(G)$ has linear resolution if and only if $G^c$ is chordal.
\medskip

For a monomial $u\in R$, define its {\it support} as $\supp(u) := \{x_i : x_i \mid u\}$. 

\begin{definition}\cite[Definition 2.2]{a17} {\rm
A monomial ideal $I$ is said to satisfy the {\em semi-gcd condition }, if for any two monomials $u,v \in \mathcal{G}(I)$ with $\gcd(u,v)=1$ there exists a monomial $w(\neq u,v) \in \mathcal{G}(I)$ such that $\supp(w) \subseteq \supp(u) \cup \supp(v)$.
}
\end{definition}
	
\begin{theorem}\cite[Theorem 2.3]{a17}\label{thm1}
Let $I$ be a monomial ideal, which contains no variable. Assume that there exists an integer $s \geq 1$ such that $I^s$ is componentwise linear. Then $I$ satisfies the semi-gcd condition.  
\end{theorem}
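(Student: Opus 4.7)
The plan is to argue the contrapositive. Suppose the semi-gcd condition fails for $I$: there exist $u, v \in \mathcal{G}(I)$ with $\gcd(u, v) = 1$ such that no $w \in \mathcal{G}(I) \setminus \{u, v\}$ satisfies $\supp(w) \subseteq X := \supp(u) \cup \supp(v)$. The goal is to show that no power $I^s$ can be componentwise linear.

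The first step is a reduction to a subring. A minimal generator of $I^s$ is a product $g_1 \cdots g_s$ with each $g_i \in \mathcal{G}(I)$, and such a product has support contained in $X$ if and only if each $g_i \in \{u, v\}$. Hence the image of $I^s$ after setting every variable outside $X$ to zero is exactly $(u, v)^s \subseteq S := \mathbb{K}[x : x \in X]$. Componentwise linearity of monomial ideals is preserved under killing a variable (a standard specialization fact, cf.~\cite{hh11}), and iterating this yields that $(u, v)^s$ is componentwise linear over $S$.

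The second step is a regularity computation to contradict componentwise linearity of $(u,v)^s$. Since $\gcd(u,v) = 1$, the pair $(u, v)$ forms a regular sequence in $S$; hence the minimal free resolution of $(u, v)^s$ is the two-generator Koszul--Eagon--Northcott-type complex
\[
0 \to \bigoplus_{a=1}^{s} S\bigl(-a \deg u - (s - a + 1) \deg v\bigr) \to \bigoplus_{a=0}^{s} S\bigl(-a \deg u - (s - a) \deg v\bigr) \to (u, v)^s \to 0,
\]
whose $a$-th first syzygy has the form $u \cdot e_{u^{a-1} v^{s-a+1}} - v \cdot e_{u^a v^{s-a}}$. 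Writing $d_u = \deg u$ and $d_v = \deg v$, and assuming without loss of generality $d_u \leq d_v$, the maximum generator degree is $s d_v$ (attained at $v^s$) while the maximum syzygy degree is $d_u + s d_v$ (attained at $a = 1$), so $\reg((u, v)^s) = d_u + s d_v - 1$. However, for any componentwise linear ideal $J$ one has $\reg(J) = \max\{\deg g : g \in \mathcal{G}(J)\}$ (a standard consequence of the definition, see \cite{hh11}). Applying this to $J = (u, v)^s$ forces $d_u = 1$, which contradicts the hypothesis that $I$ contains no variable.

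The main obstacle I anticipate is Step 1: verifying that componentwise linearity descends under the specialization $R \twoheadrightarrow S$. This is classical for monomial ideals but is the only nontrivial piece, requiring either an explicit multigraded Betti-number comparison or an inductive argument via single-variable quotients. The remainder is a direct computation using the explicit Koszul-type resolution available for powers of a length-two regular sequence, together with the familiar regularity identity for componentwise linear ideals.
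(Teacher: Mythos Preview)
The paper does not supply its own proof of this statement; it is quoted verbatim from \cite[Theorem 2.3]{a17} and used as a black box (in the proof of Proposition~\ref{thmind4}), so there is nothing in the paper to compare against.

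Your argument is correct. For Step~1, the preservation of componentwise linearity under killing a variable is precisely Proposition~\ref{propcl} of the present paper, so iterating it over the variables outside $X$ is legitimate; the failure of the semi-gcd condition is exactly what guarantees that the only minimal generators of $I$ supported on $X$ are $u$ and $v$, whence the specialization of $I^s$ is $(u,v)^s$ and nothing larger. For Step~2, the two–term complex you wrote is a resolution because $u,v$ is a regular sequence, and it is minimal because all entries of the syzygy matrix are $\pm u$ or $\pm v$, which lie in $\mathfrak m$. Hence $\reg\bigl((u,v)^s\bigr)=d_u+sd_v-1$, and comparing with the maximal generator degree $sd_v$ via Lemma~\ref{maximum} forces $d_u=1$, contradicting the hypothesis that $I$ contains no variable.
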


\begin{definition}{\rm
Let $m=x_1^{a_1}\cdots x_n^{a_n}$ be a monomial in $R=\mathbb{K}[x_1,\ldots,x_n]$. Then the {\it polarization} of $m$ is defined to be the squarefree monomial
\begin{equation*}
    \PP(m)=x_{11}x_{12} \cdots x_{1a_1} x_{21} \cdots x_{2a_2} \cdots x_{n a_n}
\end{equation*}
in the polynomial ring $\mathbb{K}[x_{i j} : 1 \leq j \leq a_i,1 \leq i \leq n ].$ If $I \subset R $ is a monomial ideal with $\mathcal{G}(I)=\{m_1, \ldots, m_u\}$ and $m_i=\prod_{j=1}^{n}x_j^{a_{ij}}$ where each $a_{ij} \geq 0$ for $i=1, \ldots, u.$ Then the {\it polarization} of $I$, denoted by $ I^{\PP}$, is defined as 
\begin{equation*}
    I^{\PP}=(\PP(m_1), \ldots, \PP(m_u)),
\end{equation*}
which is a squarefree monomial ideal in the polynomial ring $R^{\PP}=\mathbb{K}[x_{j1},x_{j2}, \ldots, x_{ja_j} \mid j=1, \ldots, n ]$, where $a_j=\max\{a_{i j} \mid i=1, \ldots,u \}$ for any $1 \leq j \leq n.$
}
\end{definition}

\begin{lemma}\cite[Proposition $1$]{nms21}\label{compntpolariz}
   Let $I$ be a monomial ideal in $R=\mathbb{K}[x_1,\ldots,x_n]$. Then $I$ is a componentwise linear ideal if and only if $I^{\PP}$ is a componentwise linear ideal. 
\end{lemma}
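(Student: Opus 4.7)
The plan is to exploit the preservation of graded Betti numbers under polarization, together with a compatibility with the truncation operator $I \mapsto I_{\ev{d}}$. I would proceed in three stages. First, I recall the standard construction of $I^{\PP}$ as an iterated sequence of elementary polarizations, each introducing one new variable $y$ to split a repeated factor $x_i^a$ as $x_i^{a-1}y$ in every minimal generator where this power appears. At each such step, the linear form $y - x_i$ is a nonzerodivisor on the resulting quotient, and the quotient by it recovers the previous ring and ideal. Iterating yields a regular sequence of linear forms $\underline{\theta}$ on $R^{\PP}/I^{\PP}$ with $R^{\PP}/(I^{\PP} + \underline{\theta}) \cong R/I$, from which the classical identity $\beta_{i,j}(I) = \beta_{i,j}(I^{\PP})$ follows. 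In particular, $I$ has a $d$-linear resolution iff $I^{\PP}$ does.

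Second, I would extend this Betti comparison to each truncation. The crux is the claim that $\beta_{i,j}(I_{\ev{d}}) = \beta_{i,j}((I^{\PP})_{\ev{d}})$ for every $d \geq 0$. I would prove this by showing that the same depolarizing regular sequence $\underline{\theta}$ remains regular modulo $(I^{\PP})_{\ev{d}}$, and that the resulting quotient is isomorphic to $R/I_{\ev{d}}$. The verification goes through an explicit inspection of the minimal generators of $(I^{\PP})_{\ev{d}}$: these are degree-$d$ monomials of the form $u \cdot \PP(m)$, where $m$ is a minimal generator of $I$ with $\deg m \leq d$ and $u$ is a monomial in $R^{\PP}$ of the correct degree. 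Under the depolarization quotient $R^{\PP} \twoheadrightarrow R^{\PP}/(\underline{\theta}) \cong R$, each such generator maps to a generator $x^{\alpha} m$ of $I_{\ev{d}}$; a colon-ideal computation establishes that no new associated primes arise to obstruct regularity of $\underline{\theta}$, so the free resolution of $(I^{\PP})_{\ev{d}}$ descends to that of $I_{\ev{d}}$, matching Betti numbers.

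Third, I would conclude: since both $I_{\ev{d}}$ and $(I^{\PP})_{\ev{d}}$ are generated in degree $d$, and since their Betti tables coincide, the $d$-linear resolution conditions match. Hence $I$ is componentwise linear (every $I_{\ev{d}}$ has a $d$-linear resolution) if and only if $I^{\PP}$ is componentwise linear.

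The main obstacle is the second stage: verifying that $\underline{\theta}$ remains a regular sequence on the truncated quotient $R^{\PP}/(I^{\PP})_{\ev{d}}$. While regularity holds automatically for the full quotient $R^{\PP}/I^{\PP}$ by the construction of polarization, truncation can in principle introduce new embedded primes, so a case-by-case check is needed. If this direct regularity argument becomes unwieldy, an alternative is to polarize $I_{\ev{d}}$ independently (in a possibly larger ambient ring than $R^{\PP}$) and then show that the resulting squarefree ideal agrees with $(I^{\PP})_{\ev{d}}$ up to multiplication by a set of irrelevant variables, which does not alter the Betti table.
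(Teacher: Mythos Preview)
The paper does not prove this lemma; it is quoted from \cite{nms21} and used as a black box, so there is no in-paper argument to compare against.

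Your primary route in stage~2 has a genuine gap that cannot be patched. The depolarizing sequence $\underline{\theta}$ is essentially \emph{never} regular on $R^{\PP}/(I^{\PP})_{\ev d}$ in the cases that matter. Whenever $I^{\PP}$ has a minimal generator $m$ of degree strictly less than $d$ (which occurs for every $d$ above the initial degree, precisely the range where componentwise linearity has content), pick any monomial $u$ of degree $d-1$ divisible by $m$; then $u\notin (I^{\PP})_{\ev d}$ while $u\cdot\mathfrak m\subseteq (I^{\PP})_{\ev d}$, so the irrelevant maximal ideal is associated to $R^{\PP}/(I^{\PP})_{\ev d}$ and every linear form in $\underline{\theta}$ is a zerodivisor. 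Concretely, for $I=(x^3,y)$ one has $I^{\PP}=(x_{1}x_{2}x_{3},y_{1})$ and $(I^{\PP})_{\ev 2}=y_{1}(x_{1},x_{2},x_{3},y_{1})$; here $(x_{2}-x_{1})\cdot y_{1}\in(I^{\PP})_{\ev 2}$ but $y_{1}\notin(I^{\PP})_{\ev 2}$. The same example kills your Betti identity $\beta_{i,j}(I_{\ev d})=\beta_{i,j}((I^{\PP})_{\ev d})$: at $d=2$ one gets $\beta_{0,2}=2$ on the left and $\beta_{0,2}=4$ on the right.

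Your fallback is also off target, because $(I^{\PP})_{\ev d}$ is not squarefree (in the example it contains $y_{1}^{2}$), so it cannot coincide with the polarization of $I_{\ev d}$ up to dummy variables. The route that actually works bypasses $(I^{\PP})_{\ev d}$ altogether: for a \emph{squarefree} monomial ideal $J$, Herzog and Hibi show that $J$ is componentwise linear iff each squarefree component $J_{[d]}$ (the ideal generated by the squarefree degree-$d$ monomials in $J$) has a $d$-linear resolution. One then compares $I_{\ev d}$ with $(I^{\PP})_{[d]}$ rather than with $(I^{\PP})_{\ev d}$; that comparison does go through polarization cleanly.
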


\begin{definition}\label{spl1}{\rm 
A monomial ideal $I\subseteq R=\mathbb{K}[X]$ is called {\it vertex splittable} if it can be obtained by the following recursive procedure.
\begin{enumerate}
    \item If $v$ is a monomial and $I=(v)$, $I=(0)$ or $I=R$, then $I$ is vertex splittable.
    \item If there is a variable $x$ in $R$ and vertex splittable ideals $I_1$ and $I_2$ in $K[X\setminus \{x\}]$ such that $I=xI_1+I_2$, $I_2 \subseteq I_1 $ and $\mathcal{G}(I)= \mathcal{G}(xI_1)\sqcup \mathcal{G}(I_2)$, then $I$ is a vertex splittable. For $I=xI_1+I_2$, the variable $x$ is said to be a {\it splitting variable} for $I$.
\end{enumerate}
}
\end{definition}

\begin{lemma}\label{simplicial}
Every chordal graph $G$ has a simplicial vertex. Moreover, if $G$ is a non-complete chordal graph, then it has two nonadjacent simplicial vertices.
\end{lemma}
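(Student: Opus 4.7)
The plan is to prove this by strong induction on $n = |V(G)|$, via the classical argument of Dirac using minimal vertex separators. The base cases $n = 1, 2$ are immediate, and if $G$ is complete every vertex is simplicial (so the first assertion holds, and the second is vacuous). If $G$ is disconnected and not all components are isolated vertices, apply induction to two distinct components and concatenate simplicial vertices coming from different components; they are automatically non-adjacent. So from now on I assume $G$ is connected, non-complete, and chordal.

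Choose any two non-adjacent vertices $a, b \in V(G)$ and let $S \subseteq V(G) \setminus \{a,b\}$ be a minimal $(a,b)$-separator, i.e.\ a minimal vertex set whose removal disconnects $a$ from $b$. Let $A$ and $B$ be the vertex sets of the components of $G \setminus S$ containing $a$ and $b$, respectively. The induced subgraphs $G[A \cup S]$ and $G[B \cup S]$ are chordal and each strictly smaller than $G$ (since $A, B$ are nonempty and disjoint from each other). The key intermediate step is to prove that $S$ is a clique of $G$. Suppose to the contrary that $u, v \in S$ are non-adjacent. By the minimality of $S$, removing either $u$ or $v$ alone fails to separate $a$ from $b$, which forces both $u$ and $v$ to have a neighbor in $A$ and a neighbor in $B$. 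Taking a shortest $u$--$v$ path $P_A$ with interior vertices in $A$ and a shortest $u$--$v$ path $P_B$ with interior vertices in $B$ and concatenating them yields an induced cycle of length $\geq 4$: there is no chord inside $P_A$ or $P_B$ by shortness, no chord between $A$ and $B$ because $S$ separates them, and no chord $\{u,v\}$ by assumption. This contradicts chordality of $G$.

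With $S$ known to be a clique, I apply the inductive hypothesis to $G[A \cup S]$. If $G[A \cup S]$ is complete, every vertex of $A$ is simplicial in $G[A \cup S]$; since $\mathcal{N}_G(a') \subseteq A \cup S$ for any $a' \in A$, such a vertex is also simplicial in $G$. If $G[A \cup S]$ is non-complete, induction gives two non-adjacent simplicial vertices; since $S$ is a clique they cannot both lie in $S$, so at least one lies in $A$ and is again simplicial in $G$. The analogous argument produces a simplicial vertex of $G$ lying in $B$. These two simplicial vertices sit in different components of $G \setminus S$ and are therefore non-adjacent in $G$, which establishes both assertions.

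The main obstacle is precisely the cliqueness of the minimal separator $S$: this is where chordality is used non-trivially, and it requires combining the minimality of $S$ (to force neighbors on both sides) with the avoidance of induced $\geq 4$-cycles (to forbid non-adjacent pairs inside $S$). Once that structural fact is in hand, the rest is a routine induction, with the only bookkeeping being that $G[A \cup S]$ and $G[B \cup S]$ are proper chordal subgraphs to which the inductive hypothesis legitimately applies.
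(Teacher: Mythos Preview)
Your argument is the classical Dirac proof via minimal vertex separators, and it is correct as written. The one place to be slightly more careful is the disconnected case: your phrasing ``if $G$ is disconnected and not all components are isolated vertices'' leaves the all-isolated-vertices case implicit, but there any two vertices are trivially non-adjacent simplicial vertices, so nothing is lost.

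The paper itself does not prove this lemma at all; it simply cites \cite{kmns13}. So there is no ``paper's proof'' to compare against---you have supplied a complete self-contained argument where the authors chose to defer to the literature. Your proof is exactly the standard one (minimal separators in chordal graphs are cliques, then induct on the two sides), and is what one would find in most textbook treatments of Dirac's theorem.
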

\begin{proof}
 The proof follows from \cite[Lemma 7]{kmns13}.  
\end{proof}

The following Remark \ref{remadjacent} has been used frequently in our proofs.

\begin{remark}\label{remadjacent}{\rm 
Let $G$ be a simple graph. It is straightforward to verify that if $x$ is a simplicial vertex in $G^c$, then $\mathcal{N}_{G}(x)$ is a minimal vertex cover of $G$. Now, suppose $G$ is a co-chordal graph. Then by Lemma \ref{simplicial}, $G^c$ has a simplicial vertex $x$. Hence, $\mathcal{N}_{G}(x)$ is a minimal vertex cover of $G$. Moreover, if $G^c$ is a non-complete chordal graph, then by Lemma \ref{simplicial}, $G^c$ has at least two non-adjacent simplicial vertices. In other words, if $G$ is a non-empty co-chordal graph, then there exist two adjacent vertices $x$ and $y$ in $G$ such that $\mathcal{N}_{G}(x)$ and $\mathcal{N}_{G}(y)$ both are minimal vertex covers of $G$.
}
\end{remark}
\begin{lemma}\cite[Lemma 2.7]{x21} \label{sum}
Let $S_1=\mathbb{K}[x_1,\ldots,x_m]$ and $S_2=\mathbb{K}[x_{m+1},\ldots,x_n]$ be two polynomial rings, $I \subset S_1$ and $J \subset S_2$ be two nonzero homogeneous ideals. Then
\begin{enumerate}
    \item $\reg(I+J)=\reg(I)+\reg(J)-1,$
    \item $\reg(I J)=\reg(I)+\reg(J).$
\end{enumerate}
\end{lemma}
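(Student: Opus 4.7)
The statement is a well-known regularity formula that hinges on the fact that $S_1$ and $S_2$ involve disjoint sets of variables, so I expect a proof via a Künneth-style tensor product of minimal free resolutions, combined with a short exact sequence for part~(1).

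For part~(2), my plan is to let $F_\bullet \to I$ be a minimal graded free resolution of $I$ over $S_1$ and $G_\bullet \to J$ a minimal graded free resolution of $J$ over $S_2$. Extending scalars (tensoring with $S_2$ and $S_1$ respectively over $\mathbb{K}$), these remain minimal free resolutions over $R = S_1 \otimes_{\mathbb{K}} S_2$. Because the variable sets are disjoint, $\mathbb{K}$-flatness forces the tensor product complex $F_\bullet \otimes_{\mathbb{K}} G_\bullet$ (viewed as a complex over $R$) to be acyclic and minimal; its homology in degree $0$ is $I \otimes_{\mathbb{K}} J \cong IJ$, where the isomorphism holds precisely because $I$ and $J$ live in disjoint variables. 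Reading off Betti numbers,
\[
\beta_{i,j}^{R}(IJ) \;=\; \sum_{\substack{i_1+i_2=i \\ j_1+j_2=j}} \beta_{i_1,j_1}^{S_1}(I)\,\beta_{i_2,j_2}^{S_2}(J),
\]
and so $\reg(IJ) = \max(j-i) = \max((j_1-i_1)+(j_2-i_2)) = \reg(I) + \reg(J)$.

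For part~(1), my plan is to exploit the short exact sequence
\[
0 \longrightarrow I \cap J \longrightarrow I \oplus J \longrightarrow I + J \longrightarrow 0,
\]
combined with the fact that, again due to disjointness of variables, $I \cap J = IJ$. The standard regularity inequality applied to this sequence yields
\[
\reg(I+J) \;\leq\; \max\bigl(\reg(I\oplus J),\; \reg(IJ) - 1\bigr) \;=\; \max\bigl(\reg I,\,\reg J,\, \reg I + \reg J - 1\bigr),
\]
and since $\reg I,\reg J \geq 1$, the last term dominates, giving $\reg(I+J) \leq \reg(I)+\reg(J)-1$. For the reverse inequality, I would chase the induced long exact sequence of $\operatorname{Tor}^R(-,\mathbb{K})$: pick indices $(i_1,j_1),(i_2,j_2)$ attaining the regularity of $I$ and $J$ respectively, so that by part~(2) the Betti number $\beta^R_{i_1+i_2,\,j_1+j_2}(IJ)$ is nonzero with $j_1+j_2-(i_1+i_2) = \reg I + \reg J$. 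The connecting map forces a nonzero Betti number for $I+J$ in homological degree $i_1+i_2+1$ and internal degree $j_1+j_2$, producing $\reg(I+J) \geq \reg I + \reg J - 1$, once one verifies that $\beta^R_{i_1+i_2,\,j_1+j_2}(I\oplus J) = 0$ (which follows because $j-i = \reg I + \reg J > \reg I, \reg J$).

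The main obstacle I anticipate is the last verification in part~(1): showing the extremal Tor class in $IJ$ truly survives the connecting homomorphism rather than being killed by the map from $I\oplus J$. This requires careful control of internal degrees to guarantee $\beta^R_{i_1+i_2,\,j_1+j_2}(I) = \beta^R_{i_1+i_2,\,j_1+j_2}(J) = 0$, which is where the strict inequality $j_1+j_2 - (i_1+i_2) > \max(\reg I, \reg J)$ (valid because both regularities are positive) becomes essential. Once that is in place, both inequalities combine to give the equality in~(1).
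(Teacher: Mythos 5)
The paper offers no proof of this lemma at all---it is quoted verbatim from \cite[Lemma 2.7]{x21}---so there is nothing internal to compare against; what matters is whether your self-contained argument is sound, and it is. Your part (2) is the standard K\"unneth argument: because the variable sets are disjoint, $IJ\cong I\otimes_{\mathbb K}J$ over $R=S_1\otimes_{\mathbb K}S_2$, the tensor product of the (base-changed) minimal resolutions is again a minimal free resolution, and the resulting Betti number convolution gives $\reg(IJ)=\reg(I)+\reg(J)$. Your part (1) correctly combines the exact sequence $0\to I\cap J\to I\oplus J\to I+J\to 0$ with the identification $I\cap J=IJ$ (valid here since $\operatorname{Tor}_1^R(R/IR,R/JR)=0$ for disjoint variables), the standard inequality $\reg(C)\le\max\{\reg(B),\reg(A)-1\}$ for the upper bound, and a Tor long-exact-sequence chase for the lower bound; the key vanishing $\beta^R_{i_1+i_2,\,j_1+j_2}(I\oplus J)=0$ does follow from $j-i=\reg(I)+\reg(J)>\max\{\reg(I),\reg(J)\}$ together with flat base change, exactly as you say. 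The only hypothesis you should make explicit is that $I$ and $J$ are \emph{proper} nonzero ideals (so they are generated in positive degrees and $\reg(I),\reg(J)\ge 1$), which is what makes the term $\reg(I)+\reg(J)-1$ dominate in the upper bound and the strict inequality work in the lower bound; this is implicit in the statement's $I\subset S_1$, $J\subset S_2$ and in the intended application to edge ideals, so it is a point of bookkeeping rather than a gap.
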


 \begin{lemma}{\cite[Lemma 4.2]{seyed18}} \label{lemregcolon}
     Let $I\subseteq R$ be a monomial ideal. Then for every monomial $u\in R$, we have $\mathrm{reg}((I:u))\leq \mathrm{reg}(I)$.
 \end{lemma}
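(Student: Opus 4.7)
The plan is to reduce the statement to the case where $u$ is a single variable, and then use a short exact sequence together with a ``variable-deletion'' bound on regularity. First, by iterating the identity $(I:vw) = ((I:v):w)$ for monomials $v, w$, any monomial $u$ factors as a product of single variables, so it suffices to prove $\reg((I:x)) \leq \reg(I)$ for every variable $x \in R$.

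For the single-variable case, I would consider the short exact sequence of graded $R$-modules
\[0 \to (R/(I:x))(-1) \xrightarrow{\cdot x} R/I \to R/(I+(x)) \to 0,\]
where the first map is injective by the definition of $(I:x)$. Applying the standard regularity inequality for short exact sequences, namely $\reg(A) \leq \max\{\reg(B), \reg(C)+1\}$ for $0 \to A \to B \to C \to 0$, yields
\[\reg(R/(I:x)) \leq \max\{\reg(R/I) - 1, \, \reg(R/(I+(x)))\},\]
so the lemma reduces to the auxiliary bound $\reg(R/(I+(x))) \leq \reg(R/I)$.

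This auxiliary bound is the step I expect to be the main obstacle. My plan to prove it is via polarization: polarization preserves the multigraded Betti numbers and hence regularity, so we may assume $I$ is squarefree. In that case, $I = I_\Delta$ for some simplicial complex $\Delta$, and $R/(I+(x))$ is the Stanley--Reisner ring of the deletion $\mathrm{del}_\Delta(x)$. By Hochster's formula, the nonzero multigraded Betti numbers of $R/I_{\mathrm{del}_\Delta(x)}$ are indexed by subsets $W \subseteq V(\Delta) \setminus \{x\}$, and for each such $W$ one has $(\mathrm{del}_\Delta(x))_W = \Delta_W$; hence these Betti numbers form a sub-collection of those of $R/I_\Delta$, immediately giving the desired regularity bound.

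Combining the steps, $\reg((I:x)) = \reg(R/(I:x)) + 1 \leq \reg(R/I) + 1 = \reg(I)$, and iterating the variable case over the factors of $u$ finishes the argument. As a possible alternative that avoids the polarization/Hochster detour, one could instead try to apply iterated mapping cones to the Taylor resolution of $I$ to produce a (not necessarily minimal) resolution of $(I:u)$ with shifts bounded above by those of $I$, but the polarization route above appears cleaner.
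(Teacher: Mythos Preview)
The paper does not give its own proof of this lemma; it is simply quoted from \cite[Lemma 4.2]{seyed18}. Your argument is correct and is essentially the standard one: reduce to a single variable via $(I:vw)=((I:v):w)$, use the exact sequence $0\to (R/(I:x))(-1)\to R/I\to R/(I,x)\to 0$, and feed in the bound $\reg(I,x)\le\reg(I)$. Note that this last auxiliary bound is exactly Lemma~\ref{lemreginduced} of the present paper (again cited rather than proved, from \cite{dhs13}), so your polarization/Hochster detour, while valid, is re-deriving something the paper already imports; you could simply invoke Lemma~\ref{lemreginduced} instead and shorten the write-up considerably.
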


\begin{lemma}\label{lemreginduced}
    Let $I$ be a monomial ideal of $R$. Then for every variable $x$ appearing in $\mathcal{G}(I)$, we have $\mathrm{reg}(I,x)\leq \mathrm{reg}(I)$.
\end{lemma}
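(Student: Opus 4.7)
The plan is to derive the bound from the standard short exact sequence relating the colon ideal $(I:x)$ with the quotient by $(I,x)$, and then invoke Lemma \ref{lemregcolon}. Concretely, for any variable $x$, multiplication by $x$ yields a graded short exact sequence
$$0 \longrightarrow (R/(I:x))(-1) \xrightarrow{\;\cdot x\;} R/I \longrightarrow R/(I,x) \longrightarrow 0.$$
Well-definedness and injectivity are immediate from the defining property of the colon ideal, and the cokernel is identified by observing that the kernel of the surjection $R/I\to R/(I,x)$ is $(I,x)/I\cong (x)/(I\cap(x))$, where $I\cap(x)=x\cdot(I:x)$, so $(x)/(x(I:x))\cong (R/(I:x))(-1)$ via the map $r\mapsto xr$, which accounts for the degree shift by $1$.

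Next, I would apply the standard regularity lemma for short exact sequences, which gives
$$\reg(R/(I,x)) \;\leq\; \max\{\reg(R/I),\; \reg((R/(I:x))(-1))-1\} \;=\; \max\{\reg(R/I),\; \reg(R/(I:x))\}.$$
Converting to the regularity of ideals via $\reg(J)=\reg(R/J)+1$ for every homogeneous ideal $J$, this becomes
$$\reg((I,x)) \;\leq\; \max\{\reg(I),\; \reg((I:x))\}.$$

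Finally, I would apply Lemma \ref{lemregcolon} with the monomial $u=x$ to conclude $\reg((I:x))\leq \reg(I)$, and therefore $\reg((I,x))\leq \reg(I)$, as desired. The argument is essentially formal and I do not anticipate any real obstacle; the only point requiring a bit of care is the degree shift in the short exact sequence and the conversion between $\reg(R/J)$ and $\reg(J)$, both of which are routine. Note that the hypothesis that $x$ appears in $\mathcal{G}(I)$ is not actually used in the proof—if $x$ does not appear, then $(I,x)$ decomposes as a sum of ideals in disjoint variables and the inequality follows at once from Lemma \ref{sum}.
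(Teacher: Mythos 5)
Your proof is correct, but it is not the route the paper takes: the paper disposes of this lemma with a one-line citation to the proof of \cite[Lemma 2.10]{dhs13}, which establishes the bound by different means (in particular not via the colon estimate of Lemma \ref{lemregcolon}, which appeared in the literature only later). Your argument instead chains together the standard short exact sequence $0\to (R/(I:x))(-1)\xrightarrow{\ \cdot x\ } R/I\to R/(I,x)\to 0$, the regularity estimate for the cokernel in a short exact sequence (the same \cite[Corollary 20.19]{e95} the paper invokes elsewhere), and Lemma \ref{lemregcolon} with $u=x$ to get $\reg((I:x))\le\reg(I)$, hence $\reg((I,x))\le\max\{\reg(I),\reg((I:x))\}\le\reg(I)$. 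All the delicate points are handled correctly: the identification of the kernel $(I,x)/I\cong (x)/x(I:x)\cong (R/(I:x))(-1)$, the degree shift, and the conversion $\reg(J)=\reg(R/J)+1$; the degenerate case where $x\in I$ is harmless since then $(I,x)=I$, and there is no circularity because Lemma \ref{lemregcolon} is an external result. What your approach buys is a proof that is self-contained given the lemmas already quoted in the paper, and it shows the hypothesis that $x$ appears in $\mathcal{G}(I)$ is superfluous (your remark via Lemma \ref{sum} even gives equality in that case); what the paper's citation buys is brevity and independence from the nontrivial colon bound. Either way, the lemma stands.
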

\begin{proof}
    Follows from the proof of \cite[Lemma 2.10]{dhs13}.
\end{proof}
\begin{lemma}\cite[Corollary 8.2.14]{hh11}\label{maximum}
 Let $I \subseteq R=\mathbb{K}[x_1,x_2,\ldots,x_n]$ be a componentwise linear ideal. Then the regularity of $I$ is equal to the highest degree of a generator in a minimal set of generators of $I$.   
\end{lemma}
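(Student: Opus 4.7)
The forward inequality $\reg(I) \geq d$, where $d$ denotes the largest degree of a minimal generator of $I$, is immediate: such a generator contributes a nonzero graded Betti number $\beta_{0,d}(I)$, which forces $\reg(I) \geq d$. The real content of the lemma is the reverse bound $\reg(I) \leq d$, and my plan is to derive it directly from the componentwise linearity hypothesis.

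By assumption, every component $I_{\ev{e}}$ has an $e$-linear resolution, so $\reg(I_{\ev{e}}) = e$ whenever $I_{\ev{e}} \neq 0$. The natural question is how these regularities of the truncations control $\reg(I)$ itself. The cleanest tool is the Herzog--Hibi Betti-number decomposition for componentwise linear ideals (see \cite[Proposition 8.2.13]{hh11}), which expresses $\beta_{i,\,i+j}(I)$ as a difference of graded Betti numbers of $I_{\ev{j}}$ and of $\m\, I_{\ev{j-1}}$. Since $I_{\ev{j}}$ has a $j$-linear resolution, this decomposition forces $\beta_{i,\,i+j}(I)$ to vanish unless $j$ actually occurs as the degree of a minimal generator of $I$; therefore $\reg(I) \leq d$.

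The main obstacle in this plan is establishing the Betti-number decomposition itself, since that is the step at which the componentwise linearity hypothesis is genuinely used. A natural starting point is the short exact sequence
\[
0 \to \m\, I_{\ev{j-1}} \to I_{\ev{j}} \to I_{\ev{j}}/\m\, I_{\ev{j-1}} \to 0,
\]
which isolates the minimal generators of $I$ in degree $j$ on the right. Chasing the associated Tor long exact sequence and using that the linear strand of $I_{\ev{j}}$ already captures all of $\reg(I_{\ev{j}})$ produces the required cancellation between the two terms in the decomposition. This analysis is precisely what is carried out in \cite[Proposition 8.2.13]{hh11}, after which the lemma follows as \cite[Corollary 8.2.14]{hh11}. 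Once the Betti decomposition is in hand, everything else in the argument is formal.
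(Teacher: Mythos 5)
Your proposal is correct and matches the paper's treatment: the paper offers no independent argument for this lemma, quoting it directly from Herzog--Hibi \cite[Corollary 8.2.14]{hh11}, and your derivation (the trivial bound $\reg(I)\geq d$ from $\beta_{0,d}(I)\neq 0$, plus the reverse bound via the Betti-number decomposition of \cite[Proposition 8.2.13]{hh11}, noting that $I_{\ev{j}}=\m I_{\ev{j-1}}$ when $j$ exceeds the top generating degree) is exactly the standard proof given in that source. No gaps to report.
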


\section{Necessary conditions for componentwise linearity}\label{seccochordal}

In this section, we study some necessary conditions for an edge ideal of a weighted oriented graph to be componentwise linear. Our main result says that for any weighted oriented graph $D$, if $I(D)$ is componentwise linear, then the underlying simple graph $G$ of $D$ is co-chordal, i.e., $\sqrt{I(D)}=I(G)$ (the radical of $I(D)$) has linear resolution which is not the case in general. 

\begin{proposition}\label{proplin}
		Let $I\subseteq R$ be a monomial ideal of degree $d$ with linear resolution. For a variable $x$ in $R$, consider the ideal $I'$ such that $\mathcal{G}(I')=\mathcal{G}(I) \setminus \{u \mid u \in \mathcal{G}(I), x \mid u\}$. Then $I'$ also has a linear resolution.   
	\end{proposition}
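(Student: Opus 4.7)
The strategy is to bound $\reg(I')$ from above by $d$, using that the ideal $(I,x)$ coincides with $(I',x)$ and that $x$ is a non-zero-divisor modulo $I'$. Once this regularity bound is established, the equigenerated nature of $I'$ will upgrade it to a linear resolution.

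First I would observe that since $I$ has a linear resolution, $I$ is generated in a single degree $d$. As $\mathcal{G}(I') \subseteq \mathcal{G}(I)$, the ideal $I'$ is likewise generated in degree $d$ (or is the zero ideal, in which case there is nothing to prove). For any ideal generated in degree $d$, the Betti number of each minimal syzygy lies in some degree $j \geq i+d$ at the $i$-th homological spot; coupled with $\reg(I') \leq d$ this forces $j = i+d$ at every spot, which is exactly a $d$-linear resolution. So it suffices to show $\reg(I') \leq d$.

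Next, I would note the key identity $(I,x) = (I',x)$, which holds because any generator of $I$ divisible by $x$ already belongs to $(x)$. Assuming $x$ divides some generator of $I$ (otherwise $I'=I$ and the claim is trivial), Lemma \ref{lemreginduced} yields
\[
\reg\bigl((I',x)\bigr) \;=\; \reg\bigl((I,x)\bigr) \;\leq\; \reg(I) \;=\; d.
\]

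The heart of the argument is then to show $\reg(I') = \reg\bigl((I',x)\bigr)$. For this I would first verify that $x$ is a non-zero-divisor on $R/I'$. Since no minimal generator of $I'$ is divisible by $x$, whenever a monomial $f$ satisfies $xf \in I'$, any $v \in \mathcal{G}(I')$ dividing $xf$ must have $\gcd(v,x)=1$ and therefore divide $f$; hence $f \in I'$, giving $(I':x) = I'$. Consequently there is a short exact sequence
\[
0 \longrightarrow (R/I')(-1) \xrightarrow{\,\cdot x\,} R/I' \longrightarrow R/(I',x) \longrightarrow 0,
\]
and since $x$ is a degree-$1$ regular element, the induced formula on graded Betti numbers is $\beta_{i,j}(R/(I',x)) = \beta_{i,j}(R/I') + \beta_{i-1,j-1}(R/I')$, which preserves $j-i$ and hence gives $\reg(R/(I',x)) = \reg(R/I')$, i.e.\ $\reg((I',x)) = \reg(I')$. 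Combined with the bound above, $\reg(I') \leq d$, and the opening reduction finishes the argument.

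The technical heart is the regularity calculation in the last paragraph; none of the steps involves a hard estimate, but assembling the pieces — in particular realizing that passing from $I$ to $(I,x) = (I',x)$ and back to $I'$ via the regular-element short exact sequence costs nothing in regularity — is the main conceptual hurdle.
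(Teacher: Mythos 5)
Your proof is correct and follows essentially the same route as the paper: both hinge on the identity $(I',x)=(I,x)$, the bound $\reg(I,x)\leq\reg(I)$ from Lemma \ref{lemreginduced}, and the observation that $I'$ is equigenerated in degree $d$ so that $\reg(I')\leq d$ forces a $d$-linear resolution. The only difference is cosmetic: you justify $\reg(I')=\reg((I',x))$ explicitly via the regular-element short exact sequence and the Betti number formula, a step the paper's proof asserts without comment since $x$ does not appear in $\mathcal{G}(I')$.
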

	\begin{proof}
		Given that $I$ is a monomial ideal of degree $d$ with linear resolution. Thus, $\reg(I)=d$. Also, observe that $I' $ is either $0$ or a monomial ideal of degree $d$. If $I'=0$, then it has linear resolution with $\reg(I')=0$. If $I'\neq 0$, then $\reg(I') \geq d$ as $\mathcal{G}(I')$ contains a generator of degree $d$. Again, by Lemma \ref{lemreginduced}, we have
		\begin{align*}
			\reg(I')=\reg(I',x)=\reg(I,x)\leq \reg(I)=d  .
		\end{align*}
		Therefore, $I'$ has a linear resolution.
	\end{proof}
 
\begin{proposition}\label{propcl}
Let $I\subseteq R$ be a componentwise linear monomial ideal. For a variable $x$ in $R$, consider the ideal $I'$ such that $\mathcal{G}(I')=\mathcal{G}(I) \setminus \{u \mid u \in \mathcal{G}(I), x \mid u\}$. Then $I'$ is also componentwise linear.     
\end{proposition}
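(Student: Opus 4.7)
The plan is to apply Proposition \ref{proplin} componentwise and transfer the result to $I'$ via Lemma \ref{compntpolariz}. Let $R_0 := \mathbb{K}[x_1, \ldots, \hat{x}, \ldots, x_n]$, and let $J \subseteq R_0$ be the ideal generated by the $x$-free minimal generators of $I$, so that $I' = J R$ as an ideal of $R$.

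Fix $d \geq 0$. Since $I$ is componentwise linear, $I_{\langle d \rangle}$ has a $d$-linear resolution; by Proposition \ref{proplin} applied to $I_{\langle d \rangle}$, the subideal $(I_{\langle d \rangle})'$ obtained by removing $x$-divisible generators also has a $d$-linear resolution over $R$. A monomial inspection shows that the degree $d$ monomials of $I$ not involving $x$ are exactly the degree $d$ monomials of $J$: any generator of $I$ dividing such a monomial must itself be $x$-free, so belongs to $\mathcal{G}(J)$. Hence $(I_{\langle d \rangle})' = J_{\langle d \rangle} R$ as ideals of $R$.

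Next I transfer this information to $R_0$. Since no minimal generator of $J_{\langle d \rangle} R$ involves $x$, its polarization uses no copies of $x$, so $(J_{\langle d \rangle} R)^{\PP} = J_{\langle d \rangle}^{\PP}$ as ideals of the same polarized ring $R_0^{\PP}$. Applying Lemma \ref{compntpolariz} both in $R$ and in $R_0$, we obtain that $J_{\langle d \rangle} R$ is componentwise linear over $R$ iff $J_{\langle d \rangle}$ is componentwise linear over $R_0$; since both ideals are equigenerated in degree $d$, this is equivalent to having a $d$-linear resolution. Because this holds for every $d$, the ideal $J$ is componentwise linear over $R_0$. Finally, the identical polarization argument applied to the equality $(JR)^{\PP} = J^{\PP}$ yields that $I' = JR$ is componentwise linear over $R$ iff $J$ is componentwise linear over $R_0$, and we are done.

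The main obstacle is verifying the two clean identifications $(I_{\langle d \rangle})' = J_{\langle d \rangle} R$ and $(JR)^{\PP} = J^{\PP}$; these are essentially careful bookkeeping, but they are the mechanism through which Proposition \ref{proplin} (a statement about a single linear resolution) and Lemma \ref{compntpolariz} (preservation of componentwise linearity under polarization) are promoted to the componentwise linear setting and across the polynomial extension $R_0 \hookrightarrow R$.
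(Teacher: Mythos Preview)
Your proof is correct and follows a genuinely different path from the paper's. The paper stays entirely inside $R$: it observes that $(I'_{\langle d\rangle}, x) = (I_{\langle d\rangle}, x)$ for each $d$ and then appeals directly to Proposition~\ref{proplin} to conclude that $I'_{\langle d\rangle}$ has a $d$-linear resolution. You instead apply Proposition~\ref{proplin} to obtain that $(I_{\langle d\rangle})' = J_{\langle d\rangle}R$ is $d$-linear, descend to $R_0$ via Lemma~\ref{compntpolariz} to conclude that $J$ is componentwise linear over $R_0$, and then ascend back to $R$ by the same polarization device applied to $(JR)^{\PP}=J^{\PP}$. The paper's route is much shorter, but its invocation of Proposition~\ref{proplin} is somewhat loose: $I'_{\langle d\rangle}$ is \emph{not} obtained from $I_{\langle d\rangle}$ by deleting all $x$-divisible generators (degree-$d$ monomials of the form $x\cdot m$ with $m\in J_{d-1}$ remain in $I'_{\langle d\rangle}$), so the equality $\reg(I'_{\langle d\rangle}) = \reg(I'_{\langle d\rangle},x)$ underlying the proof of Proposition~\ref{proplin} is not immediate here. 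Your polarization detour sidesteps this subtlety cleanly by never needing to analyze $I'_{\langle d\rangle}$ directly, at the cost of one extra layer of bookkeeping.
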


	\begin{proof}
		Let $I_{\langle d \rangle}$ be the $d$th homogeneous component of $I$, where $d$ is a positive integer. By looking at the minimal generating set of $I'$, it is easy to observe that $(I'_{\langle d \rangle},x)=(I_{\langle d \rangle},x)$. Therefore, by Preposition \ref{proplin}, we have $I'_{\langle d \rangle}$ has a linear resolution as $I_{\langle d \rangle}$ has a linear resolution. Thus, $I'$ is componentwise linear as $d$ was chosen arbitrarily.
	\end{proof}
	
	\noindent

 \begin{figure}[!h]
    \centering
     \includegraphics[width=1.0 \textwidth]{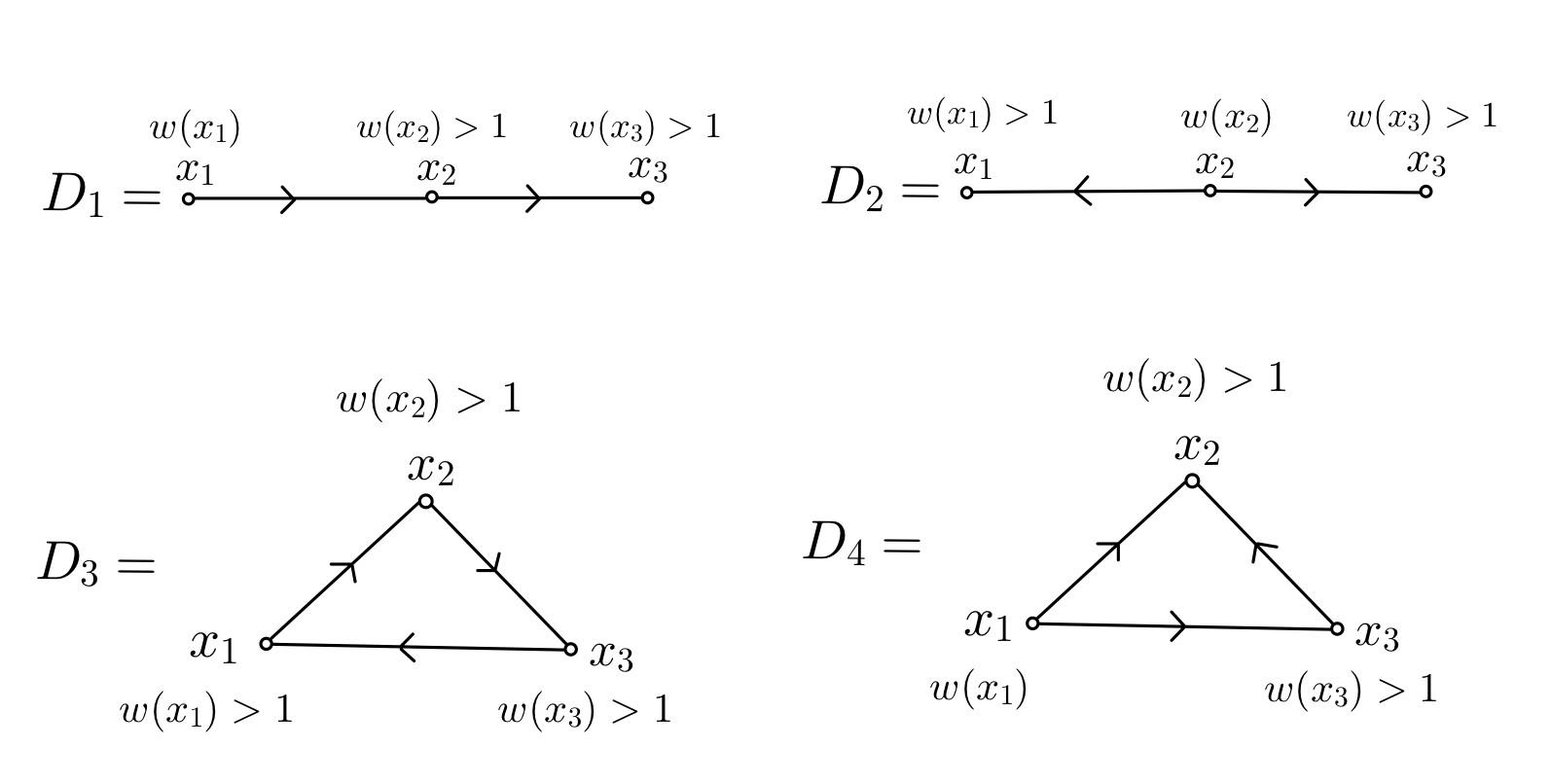} 
    \caption{$D_1, D_2, D_3, D_4$ can not be an induced subgraph of a weighted oriented graph $D$ whose edge ideal is componentwise linear.}
    \label{fig1}
\end{figure}
\begin{proposition}\label{notmaximum}
 Let $D_1, D_2, D_3$, and $D_4$ be weighted oriented graphs as in Figure \ref{fig1}. Then $\reg(I(D_i)) > d_i + 1 $, where $d_i=\max\{w(x) \mid x \in V(D_i)\}$ for $i=1,2,3,4$.
\end{proposition}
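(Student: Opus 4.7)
My approach is a case-by-case computation: for each $i\in\{1,2,3,4\}$, I would first write $I(D_i)$ explicitly from the orientation and weights shown in Figure~\ref{fig1}. Since each $D_i$ is a small configuration with only a handful of vertices and edges, $I(D_i)$ has only two or three monomial generators, and I expect to compute $\reg(I(D_i))$ exactly rather than merely bound it. The target inequality $\reg(I(D_i))>d_i+1$ would then come out of a direct numerical comparison.

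\textbf{Main tool.} The decisive tool is Lemma~\ref{sum}. In each case I would identify the greatest common monomial divisor $u$ of the generators of $I(D_i)$ and factor $I(D_i)=u\cdot J_i$; by part~(2) of Lemma~\ref{sum} this yields $\reg(I(D_i))=\deg(u)+\reg(J_i)$. The typical shape of the configurations (a short directed path such as $x\to y\to z$ with both $y$ and $z$ in $V^+$, or a closely related small graph) should allow $J_i$ to decompose as $J_i^{(1)}+J_i^{(2)}$, where $J_i^{(1)}$ and $J_i^{(2)}$ are supported on disjoint sets of variables. Part~(1) of Lemma~\ref{sum} then gives
\[
\reg(J_i)=\reg\bigl(J_i^{(1)}\bigr)+\reg\bigl(J_i^{(2)}\bigr)-1.
\]
Each of $J_i^{(1)}$ and $J_i^{(2)}$ should be either a principal monomial ideal or a pure-power complete intersection, whose regularity is read directly off the exponents.

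\textbf{Expected outcome and main difficulty.} I expect each computation to produce a regularity of the form $w(y_1)+w(y_2)+\varepsilon$ for the two positive-weight vertices $y_1,y_2$ participating in the relevant directed subconfiguration of $D_i$, with $\varepsilon\ge 0$. Writing $d_i=\max\{w(y_1),w(y_2)\}$, this strictly exceeds $d_i+1$ provided both $w(y_1),w(y_2)\ge 2$, which the four figures presumably arrange to be the case. The main obstacle is not any single numerical step but the bookkeeping: correctly matching each $D_i$ to its ideal, spotting the common monomial factor, and verifying that the residual ideal really splits into disjoint-variable pieces after the extraction. If for some $D_i$ the gcd factorization does not cleanly separate the remaining variables, I would instead pass to a colon ideal $(I(D_i):v)$ for a suitable monomial $v$ and invoke Lemma~\ref{lemregcolon} to keep the inequality intact; but I expect the direct route via Lemma~\ref{sum} to work in all four cases.
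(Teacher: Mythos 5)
Your plan works for the two configurations whose generators share a common variable, but it breaks down for the two triangle configurations in Figure \ref{fig1}. For $D_2$ your gcd factorization is exactly the paper's argument, and for $D_1$ (the oriented path with $I(D_1)=(x_1x_2^{w_2},x_2x_3^{w_3})$, which the paper instead handles by quoting \cite[Lemma 3.4]{x21}) your route also gives the right number, with one caveat: after factoring $I(D_1)=x_2\,(x_1x_2^{w_2-1},x_3^{w_3})$ the monomial $x_2$ is \emph{not} in a variable set disjoint from the residual ideal, so Lemma \ref{sum}(2) does not literally apply; you need the standard shift fact $uJ\cong J(-\deg u)$ instead. The real gap is $D_3$ and $D_4$: these are triangles, with $I(D_3)=(x_1x_2^{w_2},x_2x_3^{w_3},x_3x_1^{w_1})$ (naturally oriented, all $w_i>1$) and $I(D_4)$ of the form $(x_1x_2^{w_2},x_3x_2^{w_2},x_1x_3^{w_3})$. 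In both cases the gcd of the generators is $1$ and the supports pairwise overlap, so no monomial factor can be extracted and no splitting into ideals in disjoint variables exists; your main tool simply has nothing to act on.

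Your fallback (pass to $(I:v)$ and use Lemma \ref{lemregcolon}) also cannot close these cases, because Lemma \ref{lemregcolon} only gives the lower bound $\reg(I)\ge\reg(I:v)$, and in the minimal-weight situation no colon reaches the required value. Concretely, for $D_4$ with $w_2=w_3=2$ one has $\reg(I(D_4))=4=d_4+2$, while $(I(D_4):x_1)=(x_2^2,x_3^2)$, $(I(D_4):x_3)=(x_2^2,x_1x_3)$ and $(I(D_4):x_2)=(x_1x_2,x_2x_3,x_1x_3^2)$ all have regularity $3=d_4+1$, and colons by higher-degree monomials are even smaller; the same phenomenon occurs for $D_3$ with all weights $2$. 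So the colon route can only ever deliver $\reg\ge d+1$, never the strict inequality. The paper's proof uses different devices precisely here: for $D_3$ it invokes the closed formula $\reg(I(D_3))=\sum_i w(x_i)-|E(D_3)|+1$ of \cite[Theorem 1.4]{z19}, and for $D_4$ it uses the short exact sequence $0\to (I:x_2^{w_2})(-w_2)\to I\to (I,x_2^{w_2})\to 0$ together with the regularity lemma \cite[Corollary 20.19]{e95}: the large regularity $w_2+w_3$ lives in the quotient $(I,x_2^{w_2})=(I(D_4\setminus x_2),x_2^{w_2})$ (computable by Lemma \ref{sum}(1)), and since the colon term has strictly smaller regularity, the exact sequence forces $\reg(I(D_4))=w_2+w_3>d_4+1$. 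To repair your proposal you would need to add an argument of this kind (an exact-sequence comparison using both $(I:v)$ and $(I,v)$, or a citation of the known formulas, or a direct Betti-number computation) for the two triangle cases.
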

\begin{proof} We will show the required result in four individual cases: \\
{\bf Case 1:} Suppose $D_1$ is a weighted oriented graph as in Figure \ref{fig1}. Since $w(x_2)>1$ and $w(x_3)>1$, by \cite[Lemma 3.4]{x21},  we get
\begin{equation*}
 \reg(I(D_1))=\displaystyle\sum_{i=1}^3w(x_i)-1 > d_1+1.   
\end{equation*}
\noindent 
 {\bf Case 2:} Suppose $D_2$ is a weighted oriented graph as in Figure \ref{fig1}. Then
 \begin{align*}
 \reg(I(D_2))&=\reg((x_2x_1^{w(x_1)},x_2x_3^{w(x_3)})) \\
             &=\reg((x_2(x_1^{w(x_1)},x_3^{w(x_3)})))  \\
            &=\reg((x_1^{w(x_1)},x_3^{w(x_3)}))+1 \mbox{ ( by Lemma \ref{sum}(2) ) } \\
            &=w(x_1)+w(x_2).
 \end{align*}
 Thus, by $w(x_1)>1$ and $w(x_3)>1$ we have,
 \begin{equation*}
    \reg(I(D_2))=w(x_1)+w(x_2) > d_2+1.  
 \end{equation*}
\noindent
 {\bf Case 3:} Suppose $D_3$ is a naturally weighted oriented graph as in Figure \ref{fig1}. Then from \cite[Theorem 1.4]{z19} and $w(x_i)>1$ for all $i=1,2,3$, it follows that
 \begin{equation*}\label{eq1}
    \reg(I(D_3))=\sum_{i=1}^3w(x_i)-\vert E(D_3) \vert+1=\sum_{i=1}^3w(x_i)-2>d_3+1. 
 \end{equation*}
\noindent
  {\bf Case 4:} Suppose $D_4$ is a weighted oriented graph as in Figure \ref{fig1}. Since $D_4$ is not naturally oriented and $V^{+}$ are sinks, in $D_4$, one vertex is always a sink vertex, and one vertex is always a source vertex. In the figure, we see that $x_1$ is a source vertex and $x_2$ is a sink vertex. Now, take the following exact sequence
 \begin{equation}\label{eqn1}
     0 \rightarrow (I(D_4): x_2^{w(x_2)})(-w(x_2)) \xrightarrow{.x_2^{w(x_2)}} I(D_4) \rightarrow (I(D_4),x_2^{w(x_2)}) \rightarrow 0
 \end{equation}
 Note that 
 \begin{align*}
    \reg((I(D_4): x_2^{w(x_2)})(-w(x_2))&=\reg(I(D_4): x_2^{w(x_2)})+w(x_2) \\
                                        &=\reg((x_1,x_3))+w(x_2)=w(x_2)+1.
 \end{align*}
  Again, we have $\reg((I(D_4),x_2^{w(x_2)}))=\reg((I(D_4\setminus x_2),x_2^{w(x_2)}))=w(x_2)+w(x_3)$. Therefore, by applying regularity lemma \cite[Corollary 20.19]{e95} on the exact sequence \eqref{eqn1}, we have $\reg(I(D))=w(x_2)+w(x_3) > d_4+1$.
\end{proof}

\begin{corollary}\label{inducedcomponent}
    Let $D$ be a weighted oriented graph. Suppose $I(D)$ is componentwise linear. Then 
 $D_1,D_2,D_3$ and $D_4$ as in Figure \ref{fig1} cannot be an induced weighted oriented subgraphs of $D$.
\end{corollary}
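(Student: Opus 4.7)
The plan is to argue by contradiction, assuming some $D_i$ from Figure~\ref{fig1} appears as an induced weighted oriented subgraph of $D$, and then to show this forces a regularity bound on $I(D_i)$ that contradicts Proposition~\ref{notmaximum}. The bridge between the hypothesis on $D$ and a statement about $D_i$ is the observation, proven in Proposition~\ref{propcl}, that componentwise linearity of a monomial ideal is preserved when one deletes from the minimal generating set every generator divisible by a prescribed variable.

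First I would recall that, because $D_i$ is an \emph{induced} weighted oriented subgraph of $D$ (so the orientation and weights on $D_i$ are inherited from $D$), the generators of $I(D_i)$ are precisely those generators of $I(D)$ whose support is contained in $V(D_i)$. Thus, starting from $I(D)$ and applying Proposition~\ref{propcl} once for each variable $y \in V(D) \setminus V(D_i)$, I successively strip off all generators divisible by any such $y$, and the ideal I land on is exactly $I(D_i)$ (viewed inside the larger polynomial ring, but equivalently inside $\mathbb{K}[V(D_i)]$). Since each application of Proposition~\ref{propcl} preserves componentwise linearity, $I(D_i)$ is componentwise linear.

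Next, Lemma~\ref{maximum} tells me that the regularity of any componentwise linear ideal equals the largest degree of a minimal generator. Every generator of $I(D_i)$ has the form $x_a x_b^{w(x_b)}$ for some directed edge $(x_a,x_b)\in E(D_i)$, so its degree is $1 + w(x_b) \le 1 + d_i$, where $d_i := \max\{w(x)\mid x\in V(D_i)\}$. Consequently
\[
\reg(I(D_i)) \;\le\; d_i + 1.
\]
This directly contradicts Proposition~\ref{notmaximum}, which asserts $\reg(I(D_i)) > d_i + 1$ for each $i \in \{1,2,3,4\}$.

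The only non-routine point is verifying cleanly that the iterated deletion procedure in step two recovers precisely $I(D_i)$; this is where the hypothesis ``induced'' (rather than merely ``subgraph'') is essential, so that no edge of $D$ having both endpoints in $V(D_i)$ is missing from $D_i$ and no extra edges contribute stray generators. Once that bookkeeping is in place, the contradiction is immediate and handles all four cases uniformly.
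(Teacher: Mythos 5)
Your argument is correct and is essentially the paper's own proof: both deduce from Proposition \ref{propcl} that componentwise linearity of $I(D)$ would pass to $I(D_i)$, and then obtain a contradiction by combining Lemma \ref{maximum} (regularity equals top generating degree, hence at most $d_i+1$) with Proposition \ref{notmaximum} ($\reg(I(D_i))>d_i+1$). The only difference is presentational: you spell out the iterated deletion of variables in $V(D)\setminus V(D_i)$ and the role of the induced-subgraph hypothesis, which the paper's one-line proof leaves implicit.
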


\begin{proof}
 Note that by Lemma \ref{maximum} and Proposition \ref{notmaximum}, $I(D_i)$ is not componentwise linear for $i=1,2,3,4$. Therefore by Lemma \ref{propcl}, $I(D)$ is not componentwise linear which is a contradiction.   
\end{proof}

\begin{corollary}\label{notcomponent}
 Let $D$ be a weighted oriented graph such that $D$ is not a weighted oriented star graph with all the edges towards the root of the weighted oriented star graph. Assume all the weights of non-source vertices are greater than one. Then $I(D)$ is not componentwise linear.     
\end{corollary}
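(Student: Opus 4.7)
The plan is to argue by contradiction. Assume $I(D)$ is componentwise linear. Then by Corollary \ref{inducedcomponent}, none of the weighted oriented graphs $D_1, D_2, D_3, D_4$ appears as an induced weighted oriented subgraph of $D$. I will combine this obstruction with the hypothesis that every non-source vertex has weight strictly greater than one in order to force $D$ to be a weighted oriented star with all edges directed toward its root, contradicting the assumption.

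As a first step, I would show that every non-source vertex of $D$ is a sink. Suppose instead there is a vertex $v$ of $D$ with an incoming edge $a \to v$ and an outgoing edge $v \to u$. Then $v$ and $u$ are both non-source, so $w(v) > 1$ and $w(u) > 1$. The induced weighted oriented subgraph on $\{a, v, u\}$ falls into exactly one of three cases: if there is no edge between $a$ and $u$, it is a directed path $a \to v \to u$ with $w(v), w(u) > 1$, which is $D_1$; if the edge $u \to a$ is present, it is a naturally oriented triangle with all three weights at least two, which is $D_3$; and if instead the edge $a \to u$ is present, it is a non-cyclically oriented triangle matching $D_4$. Each alternative contradicts Corollary \ref{inducedcomponent}, so $v$ has no outgoing edge.

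For the second step, I would show there is a unique non-source vertex. Suppose $v_1 \ne v_2$ are two distinct non-source vertices; by step one they are both sinks, so in particular $\{v_1, v_2\} \notin E(G)$. Choose incoming edges $s_1 \to v_1$ and $s_2 \to v_2$. If $s_1 = s_2$, then the induced subgraph on $\{s_1, v_1, v_2\}$ realizes $D_2$ since $w(v_1), w(v_2) > 1$. If $s_1 \ne s_2$, then any edge between $s_i$ and $v_j$ with $i \ne j$ would either reproduce $D_2$ (an outgoing V from $s_i$) or make $v_j$ non-sink, and an edge between $s_1$ and $s_2$ would, upon orienting, produce a non-source vertex with an outgoing edge, contradicting step one. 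Hence the set $\{s_1, v_1, s_2, v_2\}$ induces $2K_2$ in $G$, whose complement is $C_4$, so $G$ is not co-chordal, contradicting Theorem \ref{thmcochordal}.

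Putting the two steps together, $D$ has a unique non-source vertex $r$, every edge of $D$ terminates at $r$, and so $D$ is a weighted oriented star with every edge directed toward the root $r$; this contradicts the hypothesis, completing the proof. The main obstacle is the first step, where the non-cyclically oriented triangle must be matched with $D_4$ as drawn in Figure \ref{fig1}; once the three sub-configurations are correctly identified with the forbidden graphs, the rest of the argument is a clean combinatorial analysis together with one appeal to the co-chordality result.
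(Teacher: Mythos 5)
Your argument is correct, and its core is the same as the paper's: reduce to the forbidden induced configurations $D_1,D_2,D_3,D_4$ of Figure \ref{fig1} and invoke Corollary \ref{inducedcomponent}. The difference is one of completeness. The paper disposes of the corollary in one sentence, simply asserting that a non-star $D$ with all non-source weights greater than one must contain some $D_i$ as an induced subgraph; you supply the actual case analysis (a non-source vertex with an out-edge yields $D_1$, $D_3$ or $D_4$; a common in-neighbour of two heavy sinks yields $D_2$), and your identification of the path and the two triangle subcases with $D_1$, $D_3$, $D_4$ matches the weight patterns one reads off from the regularity computations in Proposition \ref{notmaximum} (in $D_4$ the sink and the middle vertex carry the nontrivial weights, the source is irrelevant), so the step you flagged as the main obstacle is fine. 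The one place where you genuinely diverge is the configuration $s_1\to v_1$, $s_2\to v_2$ with $\{s_1,v_1,s_2,v_2\}$ inducing $2K_2$ in $G$: there you appeal to Theorem \ref{thmcochordal} (an induced $C_4$ in $G^c$) instead of exhibiting a three-vertex forbidden subgraph. This is not merely cosmetic: for disconnected $D$, say two disjoint edges oriented into vertices of weight greater than one, where $I(D)=(x_1x_2^{a},x_3x_4^{b})$ is clearly not componentwise linear, no induced $D_i$ exists anywhere, so the paper's one-line justification is strictly speaking incomplete in that case, while your argument covers it. (Both you and the paper tacitly ignore the degenerate situations of an edgeless graph or of isolated vertices attached to an inward-oriented star, where $I(D)$ is zero or has a linear resolution; the statement is implicitly meant for connected $D$ with at least one edge, and your final step inherits exactly that convention.)
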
 
\begin{proof}
Note that $D$ has a possible induced subgraph on three vertices $x_1,x_2,x_3$, which are isomorphic to one of $D_1, D_2, D_3$ and $D_4$ as in Figure \ref{fig1}. Thus, by Corollary \ref{inducedcomponent}, $I(D)$ is not componentwise linear.    
\end{proof}

 \begin{lemma}\label{lemcochord}
Let $D=(V(D), E(D), w)$ be a weighted oriented graph with underlying graph $G$ such that $G^c$ is a cycle of length greater than $3$ and $V^{+}$ are sinks. Then $\mathrm{reg}(I(D))>d+1$, where $d=\mathrm{max}\{w(x)\mid x\in V(D)\}$.
 \end{lemma}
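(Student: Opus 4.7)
My plan is to first extract a structural constraint on $V^+$ that reduces the problem to three cases, and then in each case use polarization combined with a colon short exact sequence in the polarized ring to force a strict lower bound on $\reg(I(D))$.

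The key observation is that $V^+$, as a set of sinks, must be independent in $G$: two sinks adjacent in $G$ would be joined by an oriented edge in $D$, forcing one of them to have an outgoing edge. Equivalently, $V^+$ is a clique in $G^c = C_n$, and since $C_n$ has clique number $2$ for $n \geq 4$, we have $|V^+| \leq 2$. If $|V^+|=0$, then $d=1$ and $I(D) = I(G) = I(\overline{C_n})$, and Fr\"oberg's theorem applied to the non-chordal $C_n$ immediately gives $\reg(I(D)) \geq 3 > 2 = d+1$.

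For $|V^+|=1$, write $V^+ = \{x^*\}$ with $w(x^*) = d \geq 2$. I would polarize (so $\reg(I(D)) = \reg(I(D)^{\PP})$) and consider $u = x_{*,2}\, x_{*,3} \cdots x_{*,d}$ of degree $d-1$ in the polarized ring $R^{\PP}$. A direct computation on polarized generators yields $(I(D)^{\PP} : u) = I(G)^{\PP}$, because dividing off $u$ cuts each heavy generator for an edge into the sink $x^*$ down to a quadratic; and $(I(D)^{\PP}, u) = (u) + I(D \setminus x^*)^{\PP}$ with the two summands in disjoint variable sets, because every generator involving $x^*$ is a multiple of $u$ and the remaining generators use no $x_{*,k}$. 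Since $G \setminus x^* = \overline{P_{n-1}}$ has chordal complement, Fr\"oberg gives $\reg(I(\overline{P_{n-1}})) = 2$, and Lemma \ref{sum}(1) yields $\reg((I(D)^{\PP}, u)) = d$. Plugging into the short exact sequence
\[
0 \to R^{\PP}/(I(D)^{\PP} : u)(-\deg u) \to R^{\PP}/I(D)^{\PP} \to R^{\PP}/(I(D)^{\PP}, u) \to 0,
\]
and noting that $\reg(I(G)) \geq 3$ (Fr\"oberg on the non-chordal $C_n$), I verify the strict comparison $\reg(R^{\PP}/(I(D)^{\PP}, u)) + 1 = d < d+1 \leq \reg(R^{\PP}/(I(D)^{\PP}:u)) + \deg u$, which converts the SES upper bound into $\reg(R^{\PP}/I(D)^{\PP}) \geq \reg(I(G)) + d - 2$, and hence $\reg(I(D)) \geq \reg(I(G)) + (d-1) \geq d+2$.

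For $|V^+|=2$, write $V^+ = \{x^*, y^*\}$ with $w(x^*) = d \geq w(y^*) = e \geq 2$; since $V^+$ is a clique in $C_n$, $x^*$ and $y^*$ are consecutive in $C_n$, so $y^*$ is an endpoint of $C_n \setminus x^* = P_{n-1}$. The same choice $u = x_{*,2} \cdots x_{*,d}$ now gives $(I(D)^{\PP} : u) = I(D')^{\PP}$, where $D'$ is obtained from $D$ by reducing $w(x^*)$ to $1$ (so $D'$ falls into the $|V^+|=1$ case already handled, giving $\reg(I(D')) \geq e+2$), and $(I(D)^{\PP}, u) = (u) + I(D \setminus x^*)^{\PP}$ in disjoint variables. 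For $D \setminus x^*$, whose underlying graph is $\overline{P_{n-1}}$ (chordal complement) with unique $V^+$-vertex $y^*$ of weight $e$, I would run the same polarization-SES argument with $v = x_{y^*,2} \cdots x_{y^*,e}$; this time Fr\"oberg applies to both $\overline{P_{n-1}}$ and $\overline{P_{n-2}}$, so the SES only provides the upper bound $\reg(I(D \setminus x^*)) \leq e+1$, which is matched by the trivial lower bound from the maximum generator degree. Substituting into the main SES, one gets $\reg(I(D)) \geq \reg(I(D')) + (d-1) \geq (e+2) + (d-1) = d + e + 1 > d+1$.

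The main technical obstacle is verifying the strict comparison $\reg(R^{\PP}/(I(D)^{\PP}, u)) + 1 < \reg(R^{\PP}/(I(D)^{\PP}:u)) + \deg u$ needed to convert the standard SES upper bound on $\reg(R^{\PP}/I(D)^{\PP})$ into a lower bound. The single unit of slack required is supplied by Fr\"oberg's theorem — giving $\reg(I(G)) \geq 3$ for the non-chordal cycle in Case $|V^+|=1$, or boosting $\reg(I(D')) \geq e+2$ through that same sub-case in Case $|V^+|=2$ — while Lemma \ref{sum}(1) keeps $\reg(R^{\PP}/(I(D)^{\PP}, u))$ tightly controlled by splitting it across disjoint variable sets via the chordality of $P_{n-1}$ and (in Case $|V^+|=2$) of $P_{n-2}$.
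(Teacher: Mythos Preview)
Your proof is correct and takes a genuinely different route from the paper's. Both arguments begin with Fr\"oberg for $d=1$ and then split into two cases that are secretly the same partition---your structural observation $|V^+|\le 2$ (since sinks are independent in $G$, hence a clique in $G^c=C_n$) is equivalent to the paper's dichotomy on whether a $C_n$-neighbor of the maximum-weight vertex lies in $V^+$---but the mechanics diverge. For $|V^+|=2$ the paper colons by a single weight-$1$ vertex $x$ adjacent in $G$ to both heavy sinks, reads off $\reg(I(D):x)=d+w(x_{k+1})$ via Lemma~\ref{sum}, and invokes Lemma~\ref{lemregcolon}; for $|V^+|=1$ it colons by the heavy vertex $x_k$ itself, reducing $w(x_k)$ by one, and inducts on $d$ using the two-term alternative $\reg(I(D))\in\{\reg(I(D),x_k),\reg(I(D):x_k)+1\}$ from \cite{chhktt19}. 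You instead polarize and colon by the full product $u=x_{*,2}\cdots x_{*,d}$ of extra polarization variables, collapsing the heavy vertex to weight $1$ in a single step, and then extract the lower bound from the SES via the strict comparison $\reg(R^{\PP}/(I^{\PP}:u))+\deg u>\reg(R^{\PP}/(I^{\PP},u))+1$. What this buys you: no induction on $d$, a uniform argument for all $n\ge 4$ (the paper's choice of $x\notin\{x_{k-1},x_k,x_{k+1},x_{k+2}\}$ tacitly needs $n\ge 5$, which suffices for its application in Theorem~\ref{thmcochordal} but not for the lemma as stated), and the sharper bound $\reg(I(D))\ge d+e+1$ in the two-sink case. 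What the paper's approach buys: it avoids polarization entirely and stays within single-variable colon manipulations, which keeps the bookkeeping lighter.
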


 \begin{proof}
If $d=1$, then $I(D)=I(G)$ and the assertion holds by Fr\"{o}berg's theorem. Therefore, we assume $d>1$. Let $V(D)=\{x_{1},\ldots,x_{n}\}$ such that $E(G^c)=\{\{x_{i},x_{i+1}\}\mid 1\leq i\leq n\,\,\text{and}\,\, x_{n+1}=x_1\}$. Let $x_{k}$ be the vertex such that $w(x_k)=d$. From our labelling, it is clear that $\mathcal{N}_{G}(x_i)=V(G)\setminus\{x_{i-1},x_{i+1}\}$ because $\mathcal{N}_{G^c}(x_i)=\{x_{i-1},x_{i+1}\}$ and $\{x_{i-1},x_{i+1}\}\in E(G)$. Suppose $w(x_{k+1})>1$. Take a vertex $x\in V(G)\setminus \{x_{k-1},x_{k},x_{k+1},x_{k+2}\}$. Then we have $(x,x_{k}),(x,x_{k+1})\in E(D)$ as $V^{+}$ are sinks. Thus, we can write 
$$(I(D):x)=(x_{k}^{d},x_{k+1}^{w(x_{k+1})})+(\mathcal{N}_{D}(x)\setminus \{x_{k},x_{k+1}\})+I(D\setminus \mathcal{N}_{D}[x]).$$ 
Note that $I(D\setminus \mathcal{N}_{D}[x])=I(G\setminus \mathcal{N}_{G}[x])=I(K_2)$ as $V^{+}$ are sinks. Therefore, by Lemma \ref{sum}, we have $\mathrm{reg}(I(D):x)=d+w(x_{k+1})-1+\mathrm{reg}(I(D\setminus \mathcal{N}_{D}[x]))-1=d+w(x_{k+1})>d+1$. Hence, by Lemma \ref{lemregcolon}, we get $\reg(I(D))>d+1$. The case $w(x_{k-1})>1$ is similar. Now, suppose $w(x_{k-1})=w(x_{k+1})=1$. Observe that $I(D): x_k=I(D')$, where $D'=(v(D'),E(D'),w')$ is a weighted oriented graph with underlying graph and orientation same as $D$, but, $w'(x_k)=d-1$ and $w'(x_{i})=w(x_i)$ for all $i\neq k$. Then by induction hypothesis, we have $\mathrm{reg}(I(D):x_k)> d-1+1=d$. Again, due to our assumption $w(x_{k-1})=w(x_{k+1})=1$ and $V^{+}$ being sinks, we have $I(D\setminus x_k)=I(G\setminus x_k)$. Since $G^c$ is a cycle, $(G\setminus x_k)^c$ is a path graph and so, is chordal. Therefore, we have $\mathrm{reg}(I(D\setminus x_k))=2$ by Fr\"{o}berg's theorem. Thus, $\reg((I(D),x_k))=2\neq \mathrm{reg}(I(D))$ as $d>1$. Also, by \cite[Corollary 3.3(2)]{chhktt19}, $\mathrm{reg}(I(D))\in \{\mathrm{reg}(I(D),x_k),\mathrm{reg}(I(D):x_k)+1\}$ and hence, we get $\reg(I(D))>d+1$, as required. 
 \end{proof}


\begin{proposition}\label{thmind4}
	Let $D$ be a weighted oriented graph with underlying graph $G$. If $I(D)^k$ is componentwise linear for some $k \geq 1$, then $G^c$ has no induced $4$-cycle.  
	\end{proposition}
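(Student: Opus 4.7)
The plan is to argue by contradiction, leveraging Theorem \ref{thm1} which says that componentwise linearity of some power of $I(D)$ forces $I(D)$ to satisfy the semi-gcd condition. I would first translate the graph-theoretic hypothesis about $G^c$ into a statement about edges of $D$: if $G^c$ contains an induced $4$-cycle on vertices $x_1,x_2,x_3,x_4$ (with consecutive pairs adjacent in $G^c$), then in $G$ the induced subgraph $G[\{x_1,x_2,x_3,x_4\}]$ consists of exactly the two disjoint ``diagonal'' edges $\{x_1,x_3\}$ and $\{x_2,x_4\}$, and nothing else.

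Next, I would let $u$ and $v$ be the monomial generators of $I(D)$ arising from the two oriented edges of $D$ supported on these pairs. Depending on orientation, $u \in \{x_1 x_3^{w(x_3)},\, x_3 x_1^{w(x_1)}\}$ and $v \in \{x_2 x_4^{w(x_4)},\, x_4 x_2^{w(x_2)}\}$. In every case $\supp(u) \subseteq \{x_1,x_3\}$ and $\supp(v) \subseteq \{x_2,x_4\}$, so $\gcd(u,v)=1$, and $\supp(u)\cup\supp(v) \subseteq \{x_1,x_2,x_3,x_4\}$.

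Now I would invoke Theorem \ref{thm1}: since $I(D)^k$ is componentwise linear and $I(D)$ contains no variable, $I(D)$ must satisfy the semi-gcd condition. Applied to the pair $u,v$, this produces a generator $w \in \mathcal{G}(I(D))$ with $w\neq u,v$ and $\supp(w)\subseteq \{x_1,x_2,x_3,x_4\}$. Such a $w$ is of the form $x_i x_j^{w(x_j)}$ for some directed edge $(x_i,x_j) \in E(D)$ with both endpoints in $\{x_1,x_2,x_3,x_4\}$, and hence $\{x_i,x_j\}$ is an edge of $G[\{x_1,x_2,x_3,x_4\}]$. But the only edges of this induced subgraph are $\{x_1,x_3\}$ and $\{x_2,x_4\}$, whose associated generators in $I(D)$ are precisely $u$ and $v$. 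So $w \in \{u,v\}$, contradicting $w\neq u,v$. This contradiction forces $G^c$ to have no induced $4$-cycle.

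There is no serious obstacle here; the only subtlety is being careful that the translation from an induced $4$-cycle in $G^c$ to the underlying edge structure in $G$ is exact (a matching of two disjoint edges, no more), which is what ensures the only candidates for $w$ are $u$ and $v$ themselves. The rest is a direct application of the semi-gcd consequence of componentwise linearity of a power, already recorded as Theorem \ref{thm1}.
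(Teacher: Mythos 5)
Your proposal is correct and follows essentially the same route as the paper: both arguments take the two generators of $I(D)$ coming from the diagonal edges $\{x_1,x_3\}$ and $\{x_2,x_4\}$ of $G$ determined by an induced $4$-cycle in $G^c$, observe they are coprime with no third generator supported on $\{x_1,x_2,x_3,x_4\}$, and invoke Theorem \ref{thm1} (the semi-gcd condition) to get a contradiction. Your write-up merely makes explicit the check, left implicit in the paper, that the induced subgraph $G[\{x_1,x_2,x_3,x_4\}]$ consists of exactly those two edges and that $I(D)$ contains no variable.
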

	\begin{proof}
		If possible, suppose $G^c$ has an induced $4$-cycle with vertices $x_1,x_2,x_3,x_4$. Take $u=x_1x_3^{w(x_2)}$ or $x_1^{w(x_1)}x_3$ and $v=x_2x_4^{w(x_4)}$ or $x_2^{w(x_2)}x_4$. Then $u, v  \in \mathcal{G}(I(D))$ and $gcd(u,v)=1$. But there is no monomial $w\neq u,v \in \mathcal{G}(I(D))$ with $\supp(w) \subseteq \supp(u) \cup \supp(v)$. Thus, by Theorem \ref{thm1}, we have no power of $I(D)$ is componentwise linearity, which contradicts our assumption. Therefore, $G^c$ has no induced $4$-cycle.  
	\end{proof}
 
\begin{theorem}\label{thmcochordal}
	Let $D=(V(D), E(D), w)$ be a weighted oriented graph with the underlying simple graph $G$. If $I(D)$ is componentwise linear, then $G^c$ is chordal.   
\end{theorem}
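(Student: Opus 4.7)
The plan is to argue by contradiction. Suppose $I(D)$ is componentwise linear and, for contradiction, that $G^c$ is not chordal. Then $G^c$ has an induced cycle $C_n$ for some $n\geq 4$. By Proposition~\ref{thmind4}, the case $n=4$ is excluded, so fix $n\geq 5$. Let $A\subseteq V(D)$ be the vertex set of such an induced $n$-cycle, set $D':=D[A]$ and $G':=G[A]$; then $(G')^c=C_n$. Iteratively applying Proposition~\ref{propcl} to the variables in $V(D)\setminus A$ preserves componentwise linearity at each step, and the surviving minimal generators of $I(D)$ after removing all these variables are precisely those corresponding to edges of $D'$. Hence $I(D')$ is componentwise linear, and it suffices to derive a contradiction from this.

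The main step is to establish that $V^+(D')$ consists entirely of sinks of $D'$. Suppose instead $y\in V^+(D')$ is not a sink. Since resetting the weight of a source does not alter $I(D')$, we may assume $y$ is neither a source nor a sink; then $y$ has an incoming edge $(x,y)$ and an outgoing edge $(y,z)$. Because $y$ lies on the $n$-cycle in $G^c$, its $G'$-neighbourhood has size $n-3\geq 2$ and forms a consecutive arc along the $G^c$-cycle. Partitioning this arc into the (nonempty) set $X$ of in-neighbours and $Z$ of out-neighbours of $y$, there must exist two arc-consecutive vertices lying in different parts; such a pair is nonadjacent in $G'$. Hence I can choose $x\in X$, $z\in Z$ with $\{x,z\}\notin E(G')$, so that $D'[\{x,y,z\}]$ is the induced directed path $x\to y\to z$. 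If $w(z)>1$ this is exactly the configuration $D_1$, contradicting Corollary~\ref{inducedcomponent}. The remaining subcases, where $w(z)=1$ or where alternative choices force $\{x,z\}\in E(G')$ (induced triangles), are handled by a more detailed case analysis varying the in/out-neighbour pair of $y$, together with the orientations of the triangle edges, to exhibit one of the forbidden induced configurations $D_2, D_3$, or $D_4$ inside $D'$, again contradicting Corollary~\ref{inducedcomponent}.

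Once we know $V^+(D')$ consists of sinks, Lemma~\ref{lemcochord} applies and yields $\reg(I(D'))>d+1$, where $d=\max\{w(v):v\in V(D')\}$. On the other hand, because every vertex of $V^+(D')$ is a sink, a minimal generator of maximum degree in $I(D')$ has the form $u\,v^{d}$ for some edge $(u,v)\in E(D')$ with $w(v)=d$, which has degree $d+1$. Applying Lemma~\ref{maximum} to the componentwise linear ideal $I(D')$ then forces $\reg(I(D'))=d+1$, contradicting the strict inequality from Lemma~\ref{lemcochord}. This contradiction shows that $G^c$ must be chordal.

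The hardest part will be the case analysis reducing to $V^+(D')$ sinks: the induced directed path $x\to y\to z$ immediately gives $D_1$ when $w(z)>1$, but configurations with $w(z)=1$ or with $\{x,z\}\in E(G')$ (induced triangles, which are plentiful since $G'$ is dense when $n\geq 5$) require careful judicious reselection of the in/out-neighbours of $y$, exploiting the cycle structure of $(G')^c$, to realize one of $D_1$--$D_4$ as an induced weighted oriented subgraph of $D'$.
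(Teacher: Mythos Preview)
Your ``main step'' cannot be completed as outlined. Each of the forbidden configurations $D_1,D_2,D_3,D_4$ in Figure~\ref{fig1} has at least \emph{two} vertices of weight greater than $1$ (inspect the hypotheses in the four cases of Proposition~\ref{notmaximum}). Hence, if $\vert V^{+}(D')\vert=1$, no induced weighted oriented subgraph of $D'$ is isomorphic to any $D_i$, and Corollary~\ref{inducedcomponent} gives no information. Concretely, take $n=5$, so $G'\cong C_5$; give a single vertex $y$ weight $k>1$, orient one incident edge into $y$ and the other out of $y$, and give all other vertices weight $1$. Then $y\in V^{+}(D')$ is neither a source nor a sink, yet every $3$-vertex induced subgraph contains at most one vertex of weight $>1$, so none of $D_1$--$D_4$ occurs. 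Your case analysis therefore cannot force $V^{+}(D')$ to consist of sinks, and the argument stalls precisely at the point you flag as ``hardest''.

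The paper gets around this by a different reduction: induction on $m=\sum_{x\in V(D)}w(x)$ together with polarization. After restricting (as you do) to $V(D)=V(D_C)$, if $V^{+}$ are all sinks one applies Lemma~\ref{lemcochord} directly. Otherwise one picks a non-sink $x_1\in V^{+}$ with $w(x_1)=k$, polarizes $I(D)$, and deletes the top polarization variable $x_{1k}$. By Proposition~\ref{propcl} and Lemma~\ref{compntpolariz} the result is still componentwise linear, and it equals $I(D'')^{\mathcal P}$ for a new weighted oriented graph $D''$ with $w''(x_1)=1$ and all in-edges to $x_1$ removed. Since $\sum w''<m$, induction gives that $(G'')^c$ is chordal; but $(G'')^c$ is obtained from the cycle $(G')^c$ by adding some (not all, because $\mathcal N_{D}^{+}(x_1)\neq\emptyset$) chords through $x_1$, so it still contains an induced cycle of length $\geq 4$ --- a contradiction. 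This polarization-and-weight-drop step is the key idea missing from your plan.
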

	\begin{proof}
		Let $\displaystyle m=\sum_{x \in V(D)} w(x)$. We proceed by induction on $m$. Note that $m\geq n$, where $n$ is the number of vertices of $D$. So, the base case will be $m=n$. Note that $m=n$ if and only if $w(x)=1$ for all $x\in V(D)$. In this case, we have $I(D)=I(G)$ and thus, $I(D)$ is componentwise linear implies $I(D)$ has a linear resolution as $I(D)$ is equigenerated in degree $2$. Hence, $G^c$ is chordal by Fr\"{o}berg's theorem. Now let us assume $m>n$. Then there exists at least one vertex $x_i$ with $w(x_i)>1$. From Proposition \ref{thmind4}, it follows that $G^c$ has no induced cycle of length $4$. Now suppose $G^c$ is not chordal, and it contains a cycle $C$ of length greater than or equal to $5$. Let $D_{C}$ be the induced weighted oriented subgraph of $D$ on the vertex set $V(C)$, i.e., $D_{C}=D[V(C)]$. Now, we will consider two possible cases:\par 
  
		\noindent {\bf \underline{Case 1:}} Suppose $D$ contains a vertex $x_j \not \in V(D_c)$. Then $I(D\setminus x_j)$ is componentwise linear by Proposition \ref{propcl}. Note that the underlying simple graph of $D\setminus x_{j}$ is $G\setminus x_j$. Since $x_j\not\in V(C^c)$, $C^c$ is an induced subgraph of $G\setminus x_j$. By induction hypothesis, $(G\setminus x_{j})^c$ is chordal, which contradicts the fact that $C^c$ is an induced subgraph of $G\setminus x_j$. The complement of $D\setminus x_j$ contains the cycle $C$ which is a contradiction by induction hypothesis.\par
  
	\noindent {\bf \underline{Case 2:}} Suppose $V(D)=V(D_c)$. Since $I(D)$ is componentwise linear, by Lemma \ref{maximum}, we have $\mathrm{reg}(I(D))=d+1$, where $d=\mathrm{max}\{w(x)\mid x\in V(D)\}$. Then by Lemma \ref{lemcochord}, all vertices of $V^{+}$ can not be sink as $G^c$ is a cycle of length greater than or equal to $5$. Let $V(G^c)=\{x_1,\ldots,x_n\}$ and $E(G^c)=\{\{x_{i},x_{i+1}\}\mid 1\leq i\leq n\,\,\text{and}\,\, x_{n+1}=x_{1}\}$. Now there exists a vertex in $V^+$, which is not a sink vertex. Without loss of generality, let that vertex be $x_1$ and $w(x_1)=k>1$. Since $I(D)$ is componentwise linear, by Lemma \ref{compntpolariz}, we have $I(D)^{\PP}$ is also componentwise linear. From the structure of $D$, it is clear that $\mathcal{N}_{D}(x_1)=\{x_3,\ldots,x_{n-1}\}$. Let $\mathcal{N}_{D}^{-}(x_1)=\{x_{i_1},\ldots,x_{i_{r}}\}$. Now consider the weighted oriented graph $D'=(V(D'),E(D'),w')$ as follows:
 \begin{center}
     $V(D')=V(D),\,\, E(D')=E(D)\setminus\{(x_{i_{1}},x_{1}),\ldots, (x_{i_r},x_1)\},$\\ 
     $w'(x_1)=1,\,\, w'(x_j)=w(x_j)$ for all $2\leq j\leq n$.
 \end{center}
Now we can write $(I(D)^{\PP},x_{1k})=(I',x_{1k})$, where $\mathcal{G}(I')=\{u \in \mathcal{G}(I(D)^{\PP}) \mid x_{1k} \nmid u\}$. Then from the construction of $D'$, it is easy to observe that $I'=I(D')^{\PP}$. Thus, by Proposition \ref{propcl}, we have $I(D')^{\PP}$ is componentwise linear and hence, $I(D')$ is componentwise linear by Lemma \ref{compntpolariz}. Let $G'$ be the underlying graph of $D'$. If we look at the complement of $G'$, then we can see that $E((G')^{c})=E(G^c)\cup \{\{x_1,x_{i_{1}}\},\ldots,\{x_{1},x_{i_{r}}\}\}$. Since $x_1$ is not a sink vertex in $D$, $\mathcal{N}_{D}^{+}(x_1)\neq\emptyset$. Thus, in $(G')^c$, there should exists a vertex $x_{i}$ such that $\{x_1,x_i\}\not\in E((G')^c)$. Therefore, $(G')^c$ contains an induced cycle of length greater than or equal to $4$ as $G^c$ is a cycle of length greater than or equal to $5$. Hence, $G'$ is not co-chordal. Since $w'(x_1)=1$ and $I(D')$ is componentwise linear, by induction hypothesis, $G'$ should be co-chordal and this gives a contradiction.
In both cases, we came up with a contradiction.
\end{proof}

The example below conveys that a monomial ideal may be componentwise linear, whereas its radical may not be so. But, from our Theorem \ref{thmcochordal}, we see that if $I(D)$ is componentwise linear, then $\sqrt{I(D)}=I(G)$ has a linear resolution. 

  \begin{example}\label{radical}{\rm
   Let $I \subset R=\mathbb{Q}[x_1,x_2,x_3,x_4,x_5]$ be a monomial ideal such that $I=(x_1x_2,x_2x_3,x_3x_4,x_4x_5,x_5x_1)$ whcih is the edge ideal of a $5$-cycle. By Fr\"oberg's theorem, $I$ has no linear resolution. Let $J=I^2$. Note that $\sqrt{J}=I$. Using Macaulay2 \cite{gs} we check that, $\reg(J)=4$. Thus, $J$ has linear resolution.
   }
  \end{example}
 
\section{Combinatorial characterization of componentwise linearity}\label{secvertexsplit}

In this section, we investigate some sufficient conditions for componentwise linearity of weighted oriented edge ideals. We mainly use the tools of vertex splitting and linear quotient property. In some cases, we give combinatorial characterizations of componentwise linearity for these ideals. Due to our results and computational evidence, we end up this section with Conjecture \ref{conjcl}.

\begin{lemma}\label{lemspl3}
Let $I \subseteq R$ be a monomial ideal. Then the following are true.
\begin{enumerate}
\item If $I=(I',x_{i_1},\ldots,x_{i_r})$, where $I'$ is vertex splittable and no $x_{i_{j}}$ is appearing in $\mathcal{G}(I')$, then $I$ is vertex splittable ideal.
\item If $I=(x_{i_1}u,\ldots,x_{i_{r}}u)$ with $u$ as a monomial, then $I$ is vertex splittable. In particular, if $I(D)$ has linear resolution, then $I(D)$ is vertex splittable.
\end{enumerate}
\end{lemma}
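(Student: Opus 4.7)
\emph{Proof proposal.} The plan is to induct on $r$ in each part, applying the recursive condition of Definition \ref{spl1}(2) with a carefully chosen splitting variable.

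For (1), I would induct on $r$, with $r=0$ being the trivial case $I=I'$. For the inductive step, set $I_{0}:=(I',x_{i_1},\ldots,x_{i_{r-1}})$, which is vertex splittable by the induction hypothesis. Now write $I=(I_{0},x_{i_r})=x_{i_r}I_{1}+I_{2}$ where $I_{1}=\mathbb{K}[X\setminus\{x_{i_r}\}]$ (the whole ring, vertex splittable by Definition \ref{spl1}(1)) and $I_{2}=I_{0}$. The containment $I_{2}\subseteq I_{1}$ is trivial, and the disjoint-union condition $\mathcal{G}(I)=\{x_{i_r}\}\sqcup\mathcal{G}(I_{0})$ follows from the hypothesis that $x_{i_r}$ does not appear in any minimal generator of $I'$ together with the distinctness $x_{i_r}\neq x_{i_j}$ for $j<r$.

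For (2), I would again induct on $r$, tacitly assuming that the $x_{i_j}$'s are distinct variables with $x_{i_j}\notin\supp(u)$, so that $\{x_{i_j}u\}_{j=1}^{r}$ is genuinely the minimal generating set of $I$. The base case $r=1$ is immediate since $(x_{i_1}u)$ is principal. For $r\geq 2$, choose the splitting variable $x=x_{i_r}$ and set $I_{1}=(u)$ and $I_{2}=(x_{i_1}u,\ldots,x_{i_{r-1}}u)$; both lie in $\mathbb{K}[X\setminus\{x_{i_r}\}]$. Here $I_{1}$ is principal (hence vertex splittable) and $I_{2}$ is vertex splittable by the induction hypothesis. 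The containment $I_{2}\subseteq I_{1}=(u)$ holds because $u$ divides each generator of $I_{2}$, and the disjoint union $\mathcal{G}(I)=\{x_{i_r}u\}\sqcup\{x_{i_1}u,\ldots,x_{i_{r-1}}u\}$ is clear.

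For the ``in particular'' clause, if $I(D)$ has a linear resolution then I would split into two cases. In the simple case $I(D)=I(G)$, Fr\"oberg's theorem gives that $G^{c}$ is chordal, and by Lemma \ref{simplicial} together with Remark \ref{remadjacent} one can iteratively apply part (1) along a simplicial vertex of $G^{c}$ to conclude vertex splittability; otherwise, by the characterization from \cite{bds23}, $D$ is a weighted oriented star graph with every edge directed toward the root $x_{0}$ of weight $w_{0}$, so $I(D)=x_{0}^{w_{0}}(x_{1},\ldots,x_{s})$ fits the form of (2). The main burden of the proof is really just bookkeeping: verifying the disjointness $\mathcal{G}(I)=\mathcal{G}(xI_{1})\sqcup\mathcal{G}(I_{2})$ at each split, which is precisely what the non-overlap hypotheses in (1) and (2) were designed to ensure.
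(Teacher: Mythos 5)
Your proof is correct and takes essentially the same route as the paper: induction on $r$ in both parts, splitting off a single variable (with $I_1$ the whole ring in (1) and $I_1=(u)$ in (2)), checking $I_2\subseteq I_1$ and the disjointness of generating sets, and the same two-case treatment of the ``in particular'' clause via \cite{bds23}. The only cosmetic difference is that for the case $I(D)=I(G)$ you sketch the simplicial-vertex/minimal-vertex-cover argument directly, whereas the paper simply cites \cite[Corollary 3.8]{mka16}.
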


\begin{proof}
$(1).$ Taking $x_{i_1}$ as a splitting variable, we can write $I$ as follows
$$ I=x_{i_1}R+(I',x_{i_2},\ldots,x_{i_r}).$$
Note that $(I',x_{i_2},\ldots,x_{i_r}) \subseteq R$. Now, using induction on $r$ (where the base case is a routine check), we get $(I',x_{i_2},\ldots,x_{i_r})$ is vertex splittable. Hence, $I$ is vertex splittable. \\

$(2).$ In this case, if $r=1$, then $I$ is generated by one monomial and thus, $I$ is vertex splittable by Definition \ref{spl1}. For $r>1$, we can write $I$ as follows
$$I=x_{i_1}(u)+(x_{i_2}u,\ldots,x_{i_r}u).$$
Clearly, we have $(x_{i_2}u,\ldots,x_{i_r}u)\subseteq (u)$. Again, inductively, we have $(x_{i_2}u,\ldots,x_{i_r}u)$ is vertex splittable. Therefore, $I$ is vertex splittable by Definition \ref{spl1}. Let $D$ be a weighted oriented graph with the underlying graph $G$. If $I(D)=I(G)$ and $I(D)$ has a linear resolution, then it is vertex splittable by \cite[Corollary 3.8]{mka16}. From \cite[Theorem 3.5]{bds23}, it follows that if $I(D)\neq I(G)$ and $I(D)$ has a linear resolution, then $I(D)$ has the form as in $(2)$ and hence, $I(D)$ is vertex splittable.
\end{proof}

It has been seen in the literature that $I(D)$ behaves well, sometimes like $I(G)$, when $V^{+}$ are sinks. Take graph $D_2$ as in Figure \ref{fig1}. By Corollary \ref{notcomponent}, $I(D)$ is not componentwise linear even if the underlying graph of $D_2$ is co-chordal and $V^+$ are sinks. In the following theorem, we fully characterize componentwise linear weighted oriented edge ideals when $V^{+}$ are sinks.

 \begin{theorem}\label{thmsink}
     Let $D=(V(D), E(D), w)$ be a weighted oriented graph with the underlying graph $G$ such that $V^{+}$ are sinks. Then the following are equivalent.
     \begin{enumerate}
         \item $I(D)$ is componentwise linear;
         \item $I(D)$ is vertex splittable;
         \item $I(D)$ has linear quotient property;
         \item $G^c$ is chordal and $\vert\mathcal{N}_{D}(x)\cap V^{+}\vert\leq 1$ for all $x\in V(D)$.
    \end{enumerate}
 \end{theorem}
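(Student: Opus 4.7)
The chain $(2)\Rightarrow(3)\Rightarrow(1)$ is immediate from \cite[Theorem 2.4]{mka16} and \cite[Theorem 8.2.15]{hh11}, so the task reduces to establishing $(1)\Rightarrow(4)$ and $(4)\Rightarrow(2)$. The first implication is quick: co-chordality of $G$ is precisely Theorem \ref{thmcochordal}, and if some vertex $x$ had two distinct neighbors $y_1, y_2 \in V^+$, the sink hypothesis would force both edges to be oriented $(x, y_i)$ with $y_1, y_2$ non-adjacent in $G$ (two sinks cannot share an edge), so that $D[\{x, y_1, y_2\}]$ coincides with the forbidden graph $D_2$ of Figure \ref{fig1}, contradicting Corollary \ref{inducedcomponent}.

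The substantive direction is $(4)\Rightarrow(2)$, which I would prove by induction on $|V(D)|$. When $I(D) = 0$ the claim is immediate from Definition \ref{spl1}. Otherwise $G$ has an edge, so $G^c$ is a non-complete chordal graph and Lemma \ref{simplicial} supplies two non-adjacent simplicial vertices of $G^c$. These two vertices are adjacent in $G$, so at most one of them can lie in $V^+$ (which is independent in $G$, being a set of sinks). Fix a simplicial vertex $x$ of $G^c$ with $w(x) = 1$; by Remark \ref{remadjacent}, $\mathcal{N}_G(x)$ is a minimal vertex cover of $G$. I then split
\[
I(D) = x I_1 + I_2, \quad I_1 := (y^{w(y)} \colon y \in \mathcal{N}_D^+(x)) + (y \colon y \in \mathcal{N}_D^-(x)), \quad I_2 := I(D \setminus x).
\]

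Two facts simplify $I_1$: first, $\mathcal{N}_D^-(x) \cap V^+ = \emptyset$ since a sink has no outgoing edge, and second, hypothesis (4) permits at most one element of $\mathcal{N}_D^+(x)$ to lie in $V^+$. Consequently $I_1$ is generated by variables together with at most one pure power $z^{w(z)}$, and Lemma \ref{lemspl3}(1) yields its vertex splittability. The containment $I_2 \subseteq I_1$ is then verified generator by generator: for a generator $ab^{w(b)}$ with $(a,b) \in E(D)$ and $a,b \neq x$, the vertex cover property gives $\{a, b\} \cap \mathcal{N}_D(x) \neq \emptyset$, and whichever endpoint lies in $\mathcal{N}_D(x)$ supplies a generator of $I_1$ dividing $ab^{w(b)}$ — using once more, when the endpoint is the tail $a$, that $a$ has an outgoing edge and hence $w(a) = 1$. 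Since $D \setminus x$ inherits condition (4), the inductive hypothesis gives vertex splittability of $I_2$, and the split $I(D) = xI_1 + I_2$ is a valid vertex splitting.

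The main obstacle is the existence step: one must locate a simplicial vertex $x$ of $G^c$ with $w(x) = 1$, since splitting on a vertex of weight greater than one would produce an ``$I_1$'' still involving $x$ and fail the definition of vertex splittability. The resolution rests on the strengthened simplicial-vertex statement in Lemma \ref{simplicial} (two non-adjacent simplicial vertices in any non-complete chordal graph), together with the independence of $V^+$ in $G$, which together force at least one simplicial vertex of $G^c$ to have weight one. Once $x$ is chosen, the remaining verifications are structured directly by condition (4) and Remark \ref{remadjacent}.
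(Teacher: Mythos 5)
Your proposal is correct and follows essentially the same route as the paper: the same reduction to $(1)\Rightarrow(4)$ and $(4)\Rightarrow(2)$, the same use of Theorem \ref{thmcochordal} and the forbidden subgraph $D_2$ via Corollary \ref{inducedcomponent}, and the same inductive vertex split $I(D)=xI_1+I(D\setminus x)$ at a weight-one simplicial vertex of $G^c$ obtained from Lemma \ref{simplicial}/Remark \ref{remadjacent}, with Lemma \ref{lemspl3} handling $I_1$. Your explicit generator-by-generator check of $I(D\setminus x)\subseteq I_1$ just spells out what the paper asserts in one line.
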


 \begin{proof}
 $(2)\implies (3)\implies (1)$ is known. Therefore, it is enough to show $(1)\implies (4)$ and $(4)\implies (2)$.\par 

 $(1)\implies (4)$: Let $I(D)$ be componentwise linear. Then $G^c$ is chordal by Theorem \ref{thmcochordal}. Suppose there exists a vertex $x\in V(D)$ such that $\vert\mathcal{N}_{D}(x)\cap V^{+}\vert >1$. Take two vertex $x_1$ and $x_2$ from $\mathcal{N}_{D}(x)\cap V^{+}$. Now, consider the induced weighted subgraph $D'=D[\{x,x_1,x_2\}]$. Since $x_1$ and $x_2$ are sink vertices, we have $E(D')=\{(x,x_1), (x,x_2)\}$ which is same as $D_2$ in Figure \ref{fig1}. But, by Corollary \ref{inducedcomponent}, $D_2$ cannot be an induced subgraph of $D$. Hence, $\vert\mathcal{N}_{D}(x)\cap V^{+}\vert \leq 1$.\par 
 
$(4)\implies (2)$: We may assume $G^c$ is not a complete graph; otherwise, $G$ will be empty. Since $G^c$ is chordal, there exists a simplicial vertex of $G^c$, say $x$. Then $\mathcal{N}_{G}(x)$ is a minimal vertex cover of $G$. Since $G^c$ is not a complete graph, by Remark \ref{remadjacent}, there exists $x'\in\mathcal{N}_{G}(x)$ such that $\mathcal{N}_{G}(x')$ is a minimal vertex cover of $G$. Since $V^{+}$ are sinks, at least one of $x$ or $x'$ has weight $1$. Without loss of generality, let us assume $w(x)=1$. By the given condition, we have $\vert\mathcal{N}_{D}(x)\cap V^{+}\vert\leq 1$. Let $\mathcal{N}_{D}(x)=\{y_1,\ldots,y_k\}$ such that $w(y_1)=\cdots=w(y_{k-1})=1$ and $w(y_k)$ may be any positive integer. Therefore, we can write $I(D)$ as follows:
     $$I(D)=x(y_1,\ldots,y_{k-1},y_{k}^{w(y_k)})+ I(D\setminus x).$$
     Now, by Lemma \ref{lemspl3}, the ideal $(y_1,\ldots,y_{k-1},y_{k}^{w(y_k)})$ is vertex splittable. Note that $G^c$ chordal implies $(G\setminus x)^c$ is again chordal and $\vert\mathcal{N}_{D\setminus x}(z)\cap V(D\setminus x)^{+}\vert\leq 1$ for all $z\in V(D\setminus x)$ is straight forward from the structure of $D$. Thus, applying induction on the number of vertices of $D$ (where the base case follows directly from Definition \ref{spl1}), we have $I(D\setminus x)$ is vertex splittable. Again, $(y_1,\ldots,y_k)$ being a minimal vertex cover of $G$ and $y_k$ being a sink vertex of $D$, we have $$I(D\setminus x)\subseteq (y_1,\ldots,y_{k-1},y_{k}^{w(y_k)}).$$
         Hence, by Definition \ref{spl1}, it follows that $I(D)$ is vertex splittable.
 \end{proof}
 
 \begin{figure}[!h]
    \centering
     \includegraphics[width=0.38 \textwidth]{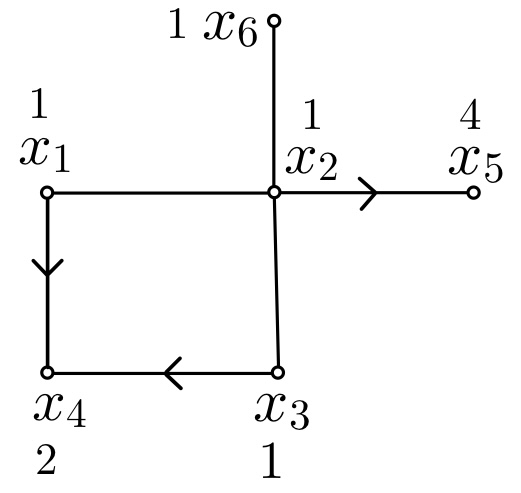} 
    \caption{Graph $D$ with $V^{+}$ sinks and componentwise linear $I(D)$.}
    \label{fig2}
\end{figure}
\begin{example}\label{exsinknotcl}
Let $D$ be a weighted oriented graph as in Figure \ref{fig2}. Then the edge ideal of $D$ is $I(D)=(x_1x_2,x_2x_6,x_2x_3,x_2x_5^{4},x_1x_4^2,x_3x_4^2)$. Note that the underlying simple graph of $D$ is co-chordal and $V^{+}$ are sinks. Also, $\vert\mathcal{N}_{D}(x)\cap V^{+}\vert\leq 1$ for all $x\in V(D)$. Thus, by Theorem \ref{thmsink}, $I(D)$ is componentwise linear.
\end{example}

Next, we will classify all componentwise linear edge ideals of weighted oriented graphs when the number of vertices having non-trivial weights (i.e. weight greater than $1$) is $1$.

 \begin{theorem}\label{thmcardinality}
     Let $D=(V(D), E(D), w)$ be a weighted oriented graph with the underlying graph $G$ and $\vert V^{+}\vert\leq 1$. Then the following statements are equivalent.
     \begin{enumerate}
         \item $I(D)$ is componentwise linear;
         \item $I(D)$ is vertex splittable;
         \item $I(D)$ has linear quotient property;
         \item $G$ and $H=H(I(D)_{\ev{2}})$ both are co-chordal.
     \end{enumerate}
 \end{theorem}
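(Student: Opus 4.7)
The implications $(2)\Longrightarrow(3)\Longrightarrow(1)$ are standard by \cite[Theorem 8.2.15]{hh11} and \cite[Theorem 2.4]{mka16}, so the plan is to establish $(1)\Longrightarrow(4)$ and $(4)\Longrightarrow(2)$. For $(1)\Longrightarrow(4)$, Theorem \ref{thmcochordal} gives $G^c$ chordal directly; and since $I(D)_{\ev{2}}=I(H)$ by the very construction of $H$, componentwise linearity of $I(D)$ forces $I(H)$ to have a $2$-linear resolution, whence Fr\"oberg's theorem yields $H^c$ chordal.

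For $(4)\Longrightarrow(2)$, I induct on $|V(D)|$. The trivial cases are $V^+=\emptyset$ or $y\in V^+$ a source (where $I(D)=I(G)=I(H)$ has a linear resolution by Fr\"oberg, hence is vertex splittable by Lemma \ref{lemspl3}(2)), and $V^+=\{y\}$ with $H$ edgeless (where $I(D)=y^{w(y)}\cdot(\mathcal{N}_D^-(y))$ is vertex splittable by Lemma \ref{lemspl3}(2)). In the main case, set $A=\mathcal{N}_D^-(y)\neq\emptyset$, $B=\mathcal{N}_D^+(y)$, $w(y)=k\geq 2$, and assume $H$ non-empty. The plan is to find a simplicial vertex $x\neq y$ of $G^c$ on which to split $I(D)$: taking $I_1$ to be the ideal generated by $\mathcal{N}_G(x)\setminus\{y\}$, together with $y^k$ if $x\in A$ and with $y$ if $x\in B$, and $I_2=I(D\setminus x)$, one obtains $I(D)=xI_1+I_2$ with $\mathcal{G}(I(D))=\mathcal{G}(xI_1)\sqcup\mathcal{G}(I_2)$. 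Then $I_1$ is vertex splittable by Lemma \ref{lemspl3}(1), and $I_2$ is vertex splittable by the inductive hypothesis, since $(G\setminus x)^c$ and $(H\setminus x)^c$ remain chordal as induced subgraphs of chordal graphs.

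The containment $I_2\subseteq I_1$ reduces to a short case analysis on the position of $x$. If $x\in B$, then $y\in\mathcal{N}_G(x)\subseteq I_1$ absorbs every generator of $I_2$ involving $y$, and the remaining edge generators lie in $I_1$ because $\mathcal{N}_G(x)$ is a vertex cover of $G$. If $x\in A$, then $y^k\in I_1$ absorbs the $ay^k$ generators, while the degree-$2$ generators $yb'$ (with $b'\in B$) are captured by $I_1$ precisely when $B\subseteq \mathcal{N}_G(x)$. If $x\notin A\cup B$, simpliciality of $x$ makes $\mathcal{N}_{G^c}[x]$ an independent set in $G$ that contains $y$; since $A\cup B=\mathcal{N}_G(y)$, none of its members can lie in this independent set, forcing $A\cup B\subseteq \mathcal{N}_G(x)$ and yielding the containment automatically.

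The main obstacle is therefore to ensure a choice of $x$ with $x\notin A$ or $B\subseteq \mathcal{N}_G(x)$. By Remark \ref{remadjacent} applied to $G$, there exist two non-adjacent simplicial vertices $v_1,v_2$ of $G^c$, of which at least one is distinct from $y$. If neither satisfies the splitting condition, both must lie in $A$ with $B\not\subseteq \mathcal{N}_G(v_i)$, so pick $b_i\in B\setminus \mathcal{N}_G(v_i)$. When $b_1=b_2$, the four vertices $\{v_1,y,v_2,b_1\}$ induce a chordless $4$-cycle in $H^c$, contradicting $H^c$ chordal. When $b_1\neq b_2$ with $\{b_1,b_2\}\notin E(G)$, the five vertices $\{v_1,y,v_2,b_2,b_1\}$ induce a chordless $5$-cycle in $H^c$, again a contradiction. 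The remaining subcase $\{b_1,b_2\}\in E(G)$ is addressed by inspecting $\mathcal{N}_{G^c}[b_1]$: simpliciality of $v_1$ shows $\mathcal{N}_{G^c}[v_1]\subseteq \mathcal{N}_{G^c}[b_1]$, and any $z\in \mathcal{N}_G(v_1)\setminus \mathcal{N}_G(b_1)$ with $z\neq y$ and $z\notin B$ produces a further chordless $4$-cycle $\{v_1,y,z,b_1\}$ in $H^c$; the leftover case $z\in B$ feeds $z$ back into the argument in the role of $b_2$, yielding by a finite descent either a forbidden induced cycle in $H^c$ or a non-$y$ simplicial vertex of $G^c$ in $B$, contradicting the standing assumption. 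Hence a workable $x$ always exists, closing the induction.
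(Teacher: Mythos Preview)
Your argument for $(1)\Rightarrow(4)$ is fine and matches the paper. The problems are in $(4)\Rightarrow(2)$.

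\textbf{Gap 1: the case $v_2=y$.} You invoke Remark \ref{remadjacent} to get two non-adjacent simplicial vertices $v_1,v_2$ of $G^c$, noting that ``at least one is distinct from $y$,'' and then assert that if neither satisfies the splitting condition then \emph{both} lie in $A$. This fails precisely when $v_2=y$: then $v_1$ is adjacent to $y$ in $G$, so $v_1\in A\cup B$, and if $v_1\in A$ with $B\not\subseteq\mathcal{N}_G(v_1)$ you have only \emph{one} vertex in $A$, and your $4$-cycle $\{v_1,y,v_2,b_1\}$ degenerates. This case genuinely occurs: take $G$ the path $a\!-\!y\!-\!b$ with $(a,y),(y,b)\in E(D)$ and $w(y)\ge 2$; then $G^c$ has $\{a,b\}$ as its only edge, and the only non-adjacent simplicial pairs in $G^c$ are $\{a,y\}$ and $\{b,y\}$, each containing $y$.

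\textbf{Gap 2: the ``finite descent''.} In the subcase $\{b_1,b_2\}\in E(G)$ you take $z\in(\mathcal{N}_G(v_1)\setminus\mathcal{N}_G(b_1))\cap B$ and propose to ``feed $z$ back in the role of $b_2$.'' But to re-enter the $5$-cycle case you need $z\notin\mathcal{N}_G(v_2)$, which is nowhere established; and no quantity is shown to decrease, so the termination claim is unjustified. The alternative outcome (``a non-$y$ simplicial vertex of $G^c$ in $B$'') would only arise when $\mathcal{N}_G(b_1)=\mathcal{N}_G(v_1)$, but you have not reduced to that situation.

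\textbf{How the paper closes these gaps.} The paper works with a \emph{single} simplicial vertex $x\neq y$ of $G^c$ (guaranteed since the two Dirac vertices are adjacent in $G$, so at most one can be $y$). If $y\notin\mathcal{N}_D^+(x)$, or if $y\in\mathcal{N}_D^+(x)$ but $\mathcal{N}_D^+(y)\subseteq\mathcal{N}_D(x)$, one splits on $x$. Otherwise there is $z\in\mathcal{N}_D^+(y)\setminus\mathcal{N}_D(x)$; using only that $H^c$ is chordal (no $4$- or $5$-cycles), the paper shows that either $\mathcal{N}_D(x)\setminus\{y\}\subseteq\mathcal{N}_H(y)$ (and one splits on $y$) or else every $v'\in\mathcal{N}_D(x)\setminus\{y\}$ is forced to be adjacent to $z$ in $H$, whence $\mathcal{N}_G(z)=\mathcal{N}_G(x)$; thus $z$ is itself simplicial in $G^c$ with $z\in B$, and one splits on $z$. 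This avoids the two-vertex bookkeeping entirely and replaces your descent by a direct forcing argument on a single $z$.
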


 \begin{proof}
 $(2) \implies (3) \implies (1) \implies (4)$ is clear.
 It is enough to show only $(4)\implies (2)$. If $\vert V^{+}\vert=0$, then $I(D)=I(G)$ and the result holds by Fr\"{o}berg's theorem and by \cite[Corollary 3.8]{mka16}. Thus, we assume $\vert V^{+}\vert=1$. \par 

$(4)\implies (2):$ Given that $G^c$ is chordal. We may assume $G^c$ is not complete; otherwise, $G$ will be an empty graph. Then by Dirac's theorem, there exist two non-adjacent simplicial vertices in $G^c$, and hence, there exist two adjacent vertices $x$ and $x'$ in $G$ such that $\mathcal{N}_{G}(x)$ and $\mathcal{N}_{G}(x')$ are minimal vertex covers of $G$. Since $\vert V^{+}\vert=1$, without loss of generality we assume $w(x)=1$. Let $\mathcal{N}_{D}^{+}(x)=\{x_1,\ldots,x_r\}$ and $\mathcal{N}_{D}^{-}(x)=\{y_1,\ldots,y_s\}$. If $w(x_i)=1$ for all $1\leq i\leq r$, then we can write
$$I(D)=x(x_1,\ldots,x_r,y_1,\ldots,y_s)+I(D\setminus x).$$
Since $G$ and $H$ bot are co-chordal, $G\setminus x$ and $H\setminus x$ are also co-chordal. Also, it is easy to verify that $H\setminus x=H(I(D\setminus x)_{\ev{2}})$. Since $\mathcal{N}_{G}(x)$ is a minimal vertex cover of $G$, we have $I(D\setminus x)\subseteq (x_1,\ldots,x_r,y_1,\ldots,y_s)$. Hence, by induction on the number of vertices of $D$ (the base case is easy to verify) and Definition \ref{spl1}, we get $I(D)$ is vertex splittable. Now, without loss of generality, we assume $w(x_1)>1$. In this set-up, if there exists no $z\in \mathcal{N}_{D}^{+}(x_1)\setminus \mathcal{N}_{D}(x)$ such that $(x_1,z)\in E(D)$, then we have
$I(D)=x(x_{1}^{w(x_1)},x_2,\ldots,x_r,y_1,\ldots,y_s)+I(D\setminus x),$
where $I(D\setminus x)\subseteq (x_{1}^{w(x_1)},x_2,\ldots,x_r,y_1,\ldots,y_s)$. By the same argument as above, it follows that $I(D)$ is vertex splittable. Now, suppose there exists $z\in \mathcal{N}_{D}^{+}(x_1)\setminus \mathcal{N}_{D}(x)$. We consider the graph $H=H(I(D)_{\ev 2})$. Given that $H^c$ is chordal. In this case, if $\{x_2,\ldots,x_r,y_1,\ldots,y_s\}\subseteq \mathcal{N}_{H}(x_1)$, then writing $I(D)=x_1(\{ux_{1}^{w(x_1)-1}\mid u\in \mathcal{N}_{D}^{-}(x_1)\}, \mathcal{N}_{D}^{+}(x_1))+I(D\setminus x_1)$ and using induction hypothesis, we can easily verify that $I(D)$ is vertex splittable. Suppose $\{x_2,\ldots,x_r,y_1,\ldots,y_s\}\not\subseteq \mathcal{N}_{H}(x_1)$ and take $v\in \{x_2,\ldots,x_r,y_1,\ldots,y_s\}\setminus \mathcal{N}_{H}(x_1)$. Note that $\{x,x_{1}\},\{x,z\}\not\in E(H)$. Since $H^c$ is chordal, $\{x_1,z\}$ and $\{x,v\}$ can not form an induced matching in $H$ and thus, we should have $\{v,z\}\in E(H)$. Let us take any arbitrary element $v'\in \{x_2,\ldots,x_r,y_1,\ldots,y_s\}\setminus \{v\}$. Now observe that $\{x_1,z\}, \{x,v'\}$ can not form an induced matching in $H$ and $H[\{x,v,z,x_1,v'\}]$ can not be an induced $5$-cycle in $H$ as $H^c$ is chordal. Therefore, the only possibility is $\{v',z\}\in E(H)$. Thus, $\mathcal{N}_{H}(z)=\{x_1,\ldots,x_r,y_1,\ldots,y_s\}$ and we can write
$$I(D)=z(x_1,\ldots,x_r,y_1,\ldots,y_s)+I(D\setminus z),$$
where $I(D\setminus z)\subseteq (x_1,\ldots,x_r,y_1,\ldots,y_s)$ as $N_{G}(x)=N_{G}(z)$ is a minimal vertex cover of $G$. Hence, applying induction on the number of vertices and using Definition \ref{spl1}, we get $I(D)$ is vertex splittable.
\end{proof}
 \begin{figure}[!h]
    \centering
     \includegraphics[width=1.0 \textwidth]{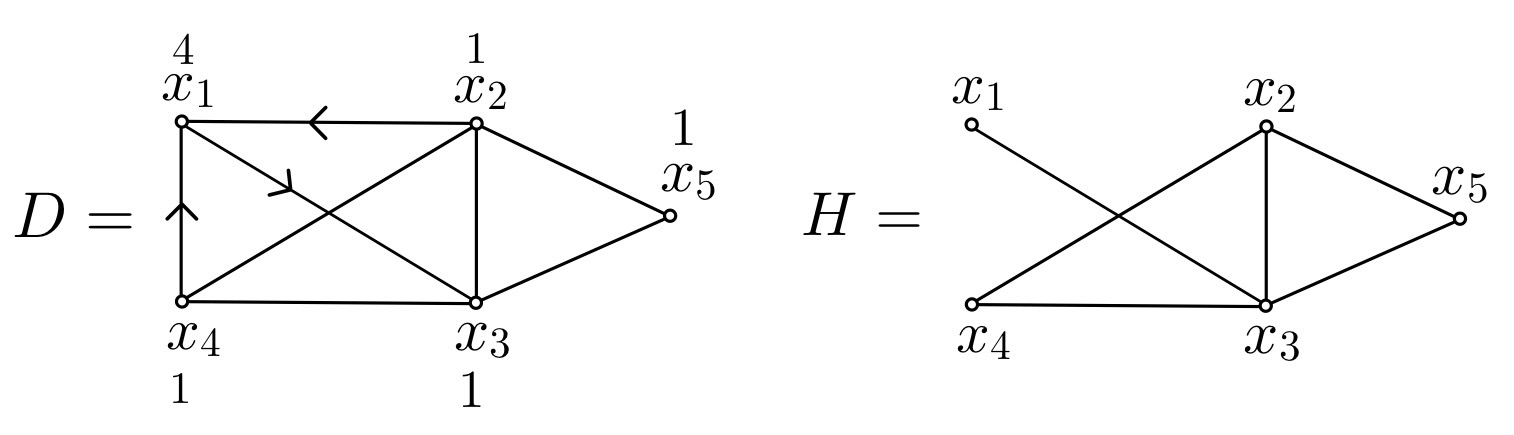} 
    \caption{Weighted oriented graph $D$ with $\vert V^{+}\vert=1$ such that $I(D)$ is componentwise linear, and $H$ is the simple graph $H(I(D)_{\ev{2}})$.}
    \label{fig3}
  \end{figure}
\begin{example}\label{one}
Let $D$ be a weighted oriented graph as in Figure \ref{fig3}. Then the edge ideal of $D$ is $I(D)=(x_4x_1^4, x_2x_1^4, x_1x_3, x_4x_3, x_2x_3, x_3x_5, x_2x_5, x_4x_2)$. Observe that the underlying simple graph of $D$ is co-chordal and $\vert V^{+}\vert = 1$. Also, the graph $H$ in Figure \ref{fig3} is co-chordal and $H=H(I(D)_{\ev 2})$. Thus, by Theorem \ref{thmcardinality}, we have $I(D)$ is componentwise linear.
\end{example}

 \begin{theorem}\label{thmbipartite}
     Let $D=(V(D), E(D), w)$ be a weighted oriented bipartite graph. Then the following statements are equivalent.
     \begin{enumerate}
         \item $I(D)$ is componentwise linear;
         \item $I(D)$ is vertex splittable;
         \item $I(D)$ has linear quotient property.
     \end{enumerate}
 \end{theorem}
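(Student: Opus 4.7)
The plan is to prove $(1) \Longrightarrow (2)$; the implications $(2) \Longrightarrow (3) \Longrightarrow (1)$ are standard from \cite[Theorem 8.2.15]{hh11} and \cite[Theorem 2.4]{mka16}. The argument proceeds by induction on $|V(D)|$, mirroring the pattern of Theorems \ref{thmsink} and \ref{thmcardinality}. By Theorem \ref{thmcochordal}, $G$ is co-chordal; combined with bipartiteness, this forces $G^c$ to have no induced $C_4$, equivalently $G$ is $2K_2$-free. Hence $G$ is a chain graph, and the $V_i$-side neighborhoods form chains under inclusion. In particular (after discarding isolated vertices), there exist top vertices $x \in V_1$ and $y \in V_2$ with $\mathcal{N}_G(x) = V_2$, $\mathcal{N}_G(y) = V_1$, and $\{x,y\} \in E(G)$; by Remark \ref{remadjacent} both $\mathcal{N}_G(x)$ and $\mathcal{N}_G(y)$ are minimal vertex covers of $G$.

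The first substantive step is to show that at least one of $w(x), w(y)$ equals $1$. If both were greater than $1$, then neither $x$ nor $y$ would be a source, so each would have an incoming edge in $D$; combining any such incoming edge with the edge $\{x,y\}$ (in either orientation) produces an induced subgraph isomorphic to $D_1$ of Figure \ref{fig1}, which by Corollary \ref{inducedcomponent} contradicts the componentwise linearity of $I(D)$. Without loss of generality, $w(x) = 1$, and I would attempt the vertex splitting $I(D) = x J_1 + J_2$ with
$$J_1 = \bigl(\{v^{w(v)} : v \in \mathcal{N}_D^+(x)\} \cup \{v : v \in \mathcal{N}_D^-(x)\}\bigr), \quad J_2 = I(D \setminus x).$$

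Since $\mathcal{N}_G(x) = V_2$ is a minimal vertex cover of $G$, an edge-by-edge analysis of the generators $cd^{w(d)}$ of $J_2$ shows $J_2 \subseteq J_1$ except in the configuration where $c \in V_2 \cap V^+$, $(x,c) \in E(D)$, and $c$ has an outgoing edge $(c,d)$ with $d \in V_1 \setminus \{x\}$ (in which case Corollary \ref{inducedcomponent} applied to $\{x,c,d\}$ forces $w(d)=1$, so the offending generator is the degree-$2$ monomial $cd$). To bypass this obstruction, I would select a different splitting vertex by exploiting the chain-graph structure of $G$ together with the chordality of $H^c$, where $H = H(I(D)_{\ev{2}})$ is co-chordal by Fr\"oberg's theorem applied to the linearity of $I(D)_{\ev{2}}$: specifically, I would locate a top vertex $v \in V_1 \cup V_2$ with $w(v) = 1$ such that every element of $\mathcal{N}_D^+(v) \cap V^+$ is a sink of $D$, which makes the naive splitting on $v$ succeed. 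Both $J_1$ (generated by variables and pure powers of variables with disjoint supports) and $J_2 = I(D \setminus v)$ (bipartite and componentwise linear by Proposition \ref{propcl}) are then vertex splittable---the former by Lemma \ref{lemspl3}, the latter by the inductive hypothesis---and Definition \ref{spl1} yields that $I(D)$ is vertex splittable.

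The main obstacle is the existence of the good splitting vertex $v$ in the previous paragraph. Establishing it requires a careful case analysis on the top classes of $V_1$ and $V_2$ together with the weights and orientations on edges incident to these top vertices, exploiting both the forbidden induced weighted subgraphs $D_1$--$D_4$ of Figure \ref{fig1} (via Corollary \ref{inducedcomponent}) and the chordality of $H^c$. This combinatorial step is the bipartite counterpart of the intricate case analyses in Theorems \ref{thmsink} and \ref{thmcardinality}, and it is where the rigidity imposed by componentwise linearity must be fully exploited.
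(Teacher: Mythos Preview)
Your overall strategy coincides with the paper's: induct on $|V(D)|$, use Theorem~\ref{thmcochordal} to get $G$ co-chordal, locate adjacent vertices whose $G$-neighbourhoods are vertex covers (your chain-graph description is a clean substitute for the paper's appeal to Remark~\ref{remadjacent}), argue one of them has weight~$1$, and attempt a vertex splitting there. You also correctly isolate the obstruction to splitting on $x$: a non-sink $x_1\in\mathcal{N}_{D}^{+}(x)\cap V^{+}$ creates degree-$2$ generators $x_1 d\in I(D\setminus x)$ that do not lie in $J_1$.

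The gap is that the resolution of this obstruction---what you yourself call ``the main obstacle''---is left as a promised case analysis rather than carried out, and this is precisely the heart of the proof. The paper's argument here is short and direct, not the intricate case split you anticipate: pick any $z\in\mathcal{N}_{D}^{+}(x_1)$ (nonempty since $x_1$ is not a sink; $w(z)=1$ since $D_1$ is forbidden). One then shows $\mathcal{N}_{H}(z)=\mathcal{N}_{G}(x)$ by a single forbidden-$C_4$ argument in $H^c$: for any $u\in\mathcal{N}_{G}(x)\setminus\{x_1\}$ one has $\{x,u\},\{z,x_1\}\in E(H)$ while $\{x,x_1\},\{x,z\},\{x_1,u\}\in E(H^c)$ (the last two by bipartiteness), so $\{z,u\}\in E(H)$ lest $x,z,u,x_1$ form an induced $4$-cycle in $H^c$. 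Hence every generator of $I(D)$ involving $z$ has degree~$2$, and splitting on $z$ yields $I(D)=z\bigl(\mathcal{N}_{G}(x)\bigr)+I(D\setminus z)$ with $I(D\setminus z)\subseteq\bigl(\mathcal{N}_{G}(x)\bigr)$, since $\mathcal{N}_{G}(z)=\mathcal{N}_{G}(x)$ is a vertex cover. Your anticipated good vertex $v$ therefore exists---it is this $z$, which even satisfies $\mathcal{N}_{D}^{+}(z)\cap V^{+}=\emptyset$---but the proposal does not produce it; searching among ``top vertices'' via a case analysis on weights and orientations obscures the fact that $z$ is handed to you directly as an out-neighbour of $x_1$. (Incidentally, $D_3$ and $D_4$ have triangle underlying graphs and cannot occur in a bipartite $D$, so only $D_1$ and $D_2$ are relevant here.)
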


 \begin{proof}
  $(2)\implies (3)\implies (1)$ is clear. Thus, it is enough to show $(1)\implies (2)$. \par
  \noindent
  $(1) \implies (2):$
     Let $I(D)$ be componentwise linear and $G$ be the underlying simple graph of $D$. We proceed by induction on the number of vertices $n$ of $D$. The cases $n=1,2$ directly follow from the definition of vertex splittable ideal. Let $n>2$ and $D$ be non-empty. By Theorem \ref{thmcochordal}, we have $G^c$ is chordal. We may assume $G^c$ is not a complete graph as $G$ is non-empty. Then by Lemma \ref{simplicial}, we have two non-adjacent simplicial vertices in $G^c$. In other words, we have two adjacent vertices $x$ and $x'$ in $G$ such that both $\mathcal{N}_{G}(x)$ and $\mathcal{N}_{G}(x')$ are minimal vertex covers of $G$. Note that if $u\in V^{+}$ is a vertex such that $\mathcal{N}_{G}(u)$ is a minimal vertex cover of $G$, then $w(v)=1$ for all $v\in \mathcal{N}_{G}(u)$ because $G$ is a bipartite graph, and $D_1$ or $D_2$ can not be an induced weighted oriented subgraph of a componentwise linear ideal by Corollary \ref{inducedcomponent}. Therefore, we have either $w(x)=1$ or $w(x')=1$ as $\{x,x'\}\in E(G)$. Without loss of generality, we assume $w(x)=1$. Let $\mathcal{N}_{D}^{+}(x)=\{x_1,\ldots,x_r\}$ and $\mathcal{N}_{D}^{-}(x)=\{y_1,\ldots,y_s\}$. Since $D_2$ can not be an induced weighted oriented subgraph of $D$ by Corollary \ref{inducedcomponent} and $G$ is a bipartite graph, we can say that at least $r-1$ vertices of $\mathcal{N}_{D}^{+}(x)$ will have weight $1$. Without loss of generality, we assume $w(x_1)\geq 1$ and $w(x_2)=\cdots=w(x_r)=1$. Now, we will consider two possible cases.\par 
     \noindent \textbf{\underline{Case-1:}} When $w(x_1)=1$ or $x_1$ is a sink vertex. In this case, we can write $I(D)$ as follows:
     $$I(D)=x(x_{1}^{w(x_1)},x_2,\ldots,x_r,y_1,\ldots,y_s)+I(D\setminus x).$$
     Since $w(x_1)=1$ or $x_1$ is sink, $\mathcal{N}_{G}(x)$ is a minimal vertex cover of $G$ implies $I(D\setminus x)\subseteq (x_{1}^{w(x_1)},x_2,\ldots,x_r,y_1,\ldots,y_s)$. Now $I(D\setminus x)$ is componentwise linear by Proposition \ref{propcl}. Thus, by the induction hypothesis, $I(D\setminus x)$ is vertex splittable. Also, by Lemma \ref{lemspl3}, $(x_{1}^{w(x_1)},x_2,\ldots,x_r,y_1,\ldots,y_s)$ is vertex splittable. Hence, $I(D)$ is vertex splittable again from Definition \ref{spl1}.\par 

     \noindent \textbf{\underline{Case-2:}} When $w(x_1)>1$ and $x_1$ is not a sink vertex. Since $G$ is bipartite, no neighbor of $x_1$ is adjacent to $x$. Note that $\mathcal{N}_{D}^{+}(x_1)\neq \emptyset$ as $x_1$ is not sink. Take a vertex $z\in N_{D}^{+}(x_1)$. Since by Corollary \ref{inducedcomponent}, $D_1$ can not be an induced weighted oriented subgraph of $D$, we have $w(z)=1$. Now, we consider the subgraph $H$ of $G$ such that $I(H)=I(D)_{\ev{2}}$. Then $I(H)$ has a linear resolution, i.e. $H^c$ is chordal by Fr\"{o}berg's theorem. Clearly, $\{x,x_{1}\}\not\in E(H)$ as $w(x_1)>1$ and $(x,x_1)\in E(D)$. From our setup, it is easy to observe that $\{z,x_1\},\{x,x_2\},\ldots,\{x,x_r\},\{x,y_1\},\ldots,\{x,y_s\}\in E(H)$. Since $H^c$ has no induced cycle of length $4$ and $G$ is bipartite, we have $\mathcal{N}_{H}(z)=\{x_1,\ldots,x_r,y_1,\ldots,y_s\}=\mathcal{N}_{G}(z)$. Thus, generator of $I(D)$ containing $z$ is of degree $2$ and we can write $I(D)$ as follows:
     $$I(D)=z(x_1,\ldots,x_r,y_1,\ldots,y_r)+I(D\setminus z).$$
     Since $\mathcal{N}_{G}(x)=\mathcal{N}_{G}(z)$ is a minimal vertex cover of $G$, we have $(x_1,\ldots,x_r,y_1,\ldots,y_s)$ is a minimal prime of $I(D)$ and hence, $I(D\setminus z)\subseteq (x_1,\ldots,x_r,y_1,\ldots,y_s)$. By Proposition \ref{propcl}, $I(D\setminus z)$ is componentwise linear and so is vertex splittable by induction hypothesis. Therefore, from Definition \ref{spl1}, it follows that $I(D)$ is vertex splittable. 
 \end{proof}
 
\begin{definition}{\rm (Split Graph)
    A split graph is one whose vertex set can be partitioned into a disjoint union of an independent set and a clique (either of which may be empty).
    }
\end{definition}

\begin{theorem}\cite[Theorem 1]{sb98}\label{thmsplit}
Let $G$ be a graph. Then $G$ is a split graph if
and only if $G$ and $G^c$ are both chordal graphs.
\end{theorem}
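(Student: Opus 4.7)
The plan is to prove both directions of the equivalence, treating the forward direction (split implies both chordal) by a direct structural argument, and the reverse direction by invoking the Foldes--Hammer forbidden-subgraph characterization of split graphs.

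For the forward direction, I will fix a partition $V(G) = I \sqcup K$ where $I$ is independent and $K$ is a clique, and suppose for contradiction that $G$ contains an induced cycle $C = v_1 v_2 \cdots v_n v_1$ with $n \geq 4$. The key observation is that at most two vertices of $C$ can lie in $K$, since any two non-consecutive cycle vertices lying in the clique $K$ would produce a chord of $C$. A short case analysis (on whether $0$, $1$, or $2$ cycle vertices lie in $K$, and in the last case noting that the two $K$-vertices must be consecutive on $C$) will force two consecutive cycle vertices to lie in $I$, contradicting the independence of $I$. Hence $G$ is chordal. The argument for $G^c$ is identical after noting that $V(G^c) = K \sqcup I$, with the roles of clique and independent set exchanged, is again a split partition.

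For the reverse direction, I will use the standard Foldes--Hammer characterization that a graph is split if and only if it contains no induced copy of $2K_2$, $C_4$, or $C_5$. Assuming this, if both $G$ and $G^c$ are chordal, I need only rule out these three forbidden configurations in $G$: an induced $C_4$ would directly witness non-chordality of $G$; an induced $C_5$ would witness non-chordality of $G$ (and, using that $C_5$ is self-complementary, also of $G^c$); and an induced $2K_2$ would correspond, via $\overline{2K_2} = C_4$, to an induced $C_4$ in $G^c$, contradicting chordality of $G^c$. Thus $G$ contains no such forbidden subgraph, so $G$ is split.

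The main obstacle is proving the Foldes--Hammer forbidden-subgraph characterization itself, which carries the nontrivial content. My plan there is to take a maximum clique $K$ of $G$ and show that $V(G) \setminus K$ must be independent: if not, one picks adjacent $u, v \notin K$, and by maximality of $K$ selects $k_u, k_v \in K$ non-adjacent to $u$, $v$ respectively; a case analysis on whether $k_u = k_v$ and on the adjacencies between $\{u, v\}$ and $\{k_u, k_v\}$ produces one of $2K_2$, $C_4$, or $C_5$ as an induced subgraph of $G$, yielding the required contradiction.
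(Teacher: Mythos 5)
Your forward direction is correct, and your reduction of the converse to the F\"oldes--Hammer forbidden-subgraph characterization (no induced $2K_2$, $C_4$, $C_5$) is also sound, including the use of $\overline{2K_2}=C_4$. Since the paper offers no proof of this theorem but only cites the literature, the real content of your proposal is your proof of the F\"oldes--Hammer characterization itself, and that is where there is a genuine gap: the claim your plan rests on, namely that for an \emph{arbitrary} maximum clique $K$ of a $(2K_2,C_4,C_5)$-free graph the set $V(G)\setminus K$ is independent, is false. Take $G$ on $\{a,b,c,x,y\}$ with edges $ab,ac,bc,xa,xb,yc$. One checks directly that $G$ has no induced $2K_2$, $C_4$ or $C_5$ (it is split, with clique $\{a,b,c\}$ and independent set $\{x,y\}$), and $K=\{a,b,x\}$ is a maximum clique, yet $V(G)\setminus K=\{c,y\}$ contains the edge $cy$. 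Consequently your case analysis cannot always produce a forbidden subgraph: with $u=c$, $v=y$, the only possible $k_u$ is $x$, and every choice of $k_v\in\{a,b,x\}$ yields only an induced path (or a three-vertex configuration with no fourth vertex of $K$ nonadjacent to both $u$ and $v$), so no contradiction arises --- as indeed it cannot, since the graph contains none of the forbidden subgraphs.

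The standard way to repair this (F\"oldes--Hammer; see also Golumbic's book) is to choose, among all maximum cliques, one for which $G[S]$, $S=V(G)\setminus K$, has the fewest edges. Given an edge $xy$ in $S$, one first shows that if every vertex of $K$ is adjacent to $x$ or $y$ then two suitable vertices of $K$ together with $x,y$ induce a $C_4$; hence some $z\in K$ is adjacent to neither, and $2K_2$-freeness plus another $C_4$ argument lets you assume every vertex of $K\setminus\{z\}$ is adjacent to $x$. Then $(K\setminus\{z\})\cup\{x\}$ is again a maximum clique, and the edge-minimality of $K$ forces $z$ to have a neighbour $s\in S$ with $s\not\sim x$; $2K_2$-freeness gives $s\sim y$, and a non-neighbour of $s$ in $K$ then produces an induced $C_4$ or $C_5$, the final contradiction. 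Without this extremal choice of $K$ (or an equivalent exchange/potential argument) the lemma you need is simply not true, so this step must be reworked. Alternatively, you could bypass the forbidden-subgraph route entirely and prove ``$G$ and $G^c$ chordal $\Rightarrow$ split'' directly by induction using simplicial vertices of $G$ and of $G^c$, which is closer in spirit to the chordality hypotheses you are given.
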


\begin{theorem}\label{thmchordal}
     Let $D=(V(D), E(D), w)$ be a weighted oriented chordal graph. Then the following statements are equivalent.
     \begin{enumerate}
         \item $I(D)$ is componentwise linear;
         \item $I(D)$ is vertex splittable;
         \item $I(D)$ has linear quotient property.
     \end{enumerate}
\end{theorem}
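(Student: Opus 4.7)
The implications $(2)\Rightarrow(3)\Rightarrow(1)$ are standard, as noted in the introduction. My plan for $(1)\Rightarrow(2)$ is an induction on $n=|V(D)|$ following the template of Theorem \ref{thmbipartite}, but with a new structural ingredient: when $G$ is chordal and $I(D)$ is componentwise linear, Theorem \ref{thmcochordal} gives that $G^c$ is also chordal, so by Theorem \ref{thmsplit}, $G$ is a split graph with partition $V(G)=K\sqcup S$ into a clique and an independent set. The base cases $n\leq 2$ follow directly from Definition \ref{spl1}. In the inductive step, I may assume $G$ is non-empty (else $I(D)=0$) and $G^c$ is non-complete (else $G$ has no edges).

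By Lemma \ref{simplicial} applied to $G^c$ and Remark \ref{remadjacent}, I obtain adjacent vertices $x,x'\in V(G)$ such that both $\mathcal{N}_G(x)$ and $\mathcal{N}_G(x')$ are minimal vertex covers of $G$. The first key step is to show that $x$ can be chosen with $w(x)=1$. Suppose for contradiction that $w(x),w(x')>1$. Orient $\{x,x'\}$, say $(x,x')\in E(D)$; then $x$ is not a source (sources may be assigned weight one), so there exists $z\in\mathcal{N}_D^-(x)$. I would then analyze the induced subgraph on $\{z,x,x'\}$: if $\{z,x'\}\notin E(G)$, this triple realizes an induced $D_3$, contradicting Corollary \ref{inducedcomponent}. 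Otherwise $z,x,x'$ all lie in the clique $K$, and I would extend the argument by involving a vertex of $S$ (using that $G^c$ being non-complete forces $S\neq\emptyset$ or at least a vertex outside $\mathcal{N}_G[x]$) to realize a forbidden induced configuration; if this still fails for the initial pair, I would re-select the simplicial vertex of $G^c$ among those in $K$ with a non-neighbor in $S$, which exist by the non-completeness of $G^c$.

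Once $w(x)=1$ is secured, let $\mathcal{N}_D^+(x)=\{x_1,\ldots,x_r\}$ and $\mathcal{N}_D^-(x)=\{y_1,\ldots,y_s\}$. Corollary \ref{inducedcomponent} (no induced $D_2$) implies at most one out-neighbor, say $x_1$, has weight $>1$. If $x_1$ is a sink or $w(x_1)=1$, I write
\[I(D)=x\cdot(x_1^{w(x_1)},x_2,\ldots,x_r,y_1,\ldots,y_s)+I(D\setminus x),\]
verify the containment $I(D\setminus x)\subseteq(x_1^{w(x_1)},x_2,\ldots,x_r,y_1,\ldots,y_s)$ from the minimal vertex cover property of $\mathcal{N}_G(x)$, and apply Lemma \ref{lemspl3} together with the induction hypothesis on $I(D\setminus x)$ (componentwise linear by Proposition \ref{propcl}; $G\setminus x$ remains chordal since chordality is hereditary) to conclude via Definition \ref{spl1}. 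When $x_1$ is not a sink and $w(x_1)>1$, I follow the substitution strategy of Theorem \ref{thmbipartite} Case 2: pick $z\in\mathcal{N}_D^+(x_1)$, invoke Corollary \ref{inducedcomponent} (no induced $D_1$) to force $w(z)=1$, then use chordality of $G^c$ together with Fr\"oberg's theorem applied to $H(I(D)_{\langle 2\rangle})$ to conclude $\mathcal{N}_G(z)=\mathcal{N}_G(x)$, and split along $z$ instead.

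The main obstacle is securing $w(x)=1$: unlike the bipartite case, in a split graph $x$ and $x'$ can share many common neighbors inside $K$, which suppresses the natural realizations of the forbidden induced paths $D_1,\ldots,D_4$. The delicate part is thus the subcase where a candidate triple $\{z,x,x'\}$ is entirely contained in $K$, for which I anticipate needing to refine the choice of $x$ to a simplicial vertex of $G^c$ having a non-neighbor in $S$ and then exploiting the minimal-vertex-cover property together with the orientation constraints to produce a forbidden induced subgraph crossing between $K$ and $S$. The remainder of the argument should then follow the Theorem \ref{thmbipartite} pattern without essential modification.
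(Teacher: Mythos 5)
Your outline matches the paper's high-level strategy (induction on $n$, Theorem \ref{thmcochordal} plus Theorem \ref{thmsplit} to get a split partition $V(G)=A\sqcup B$, then split off a weight-one vertex), but the two places you flag as "to be done" are precisely where the real work lies, and your plan for them does not go through. First, securing a weight-one vertex: you insist on finding it among the pair $x,x'$ coming from simplicial vertices of $G^c$ and only sketch a contradiction argument ("involve a vertex of $S$\dots re-select the simplicial vertex\dots"), which is not a proof; note also that your forbidden triple $z\to x\to x'$ with a non-edge $\{z,x'\}$ is the path $D_1$, not $D_3$ (the $D_3,D_4$ of Figure \ref{fig1} are triangles). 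The paper avoids this difficulty entirely: in a split graph $\mathcal{N}_G(x_i)$ is a vertex cover for \emph{every} clique vertex $x_i\in A$, so one is free to take the weight-one vertex anywhere in $A$, and its existence there follows from Corollary \ref{inducedcomponent}; no simplicial-vertex selection is needed.

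Second, and more seriously, your claim that the remaining case ($w(x_1)>1$, $x_1$ not a sink) "follows the Theorem \ref{thmbipartite} pattern without essential modification" is wrong. The bipartite Case 2 argument uses bipartiteness twice in an essential way: no neighbor of the heavy vertex is adjacent to $x$, and the absence of odd cycles forces $\mathcal{N}_H(z)=\mathcal{N}_G(z)=\mathcal{N}_G(x)$, so one can split along $z$. In the chordal (split) setting the heavy out-neighbor typically lies inside the clique $A$, its out-neighbor $z$ can be adjacent to $x$, $\mathcal{N}_G(z)$ need not coincide with $\mathcal{N}_G(x)$, and the correct splitting vertex may not be $z$ at all. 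This is exactly why the paper's proof branches according to whether the heavy out-neighbor $x_2$ lies in $B$ or in $A$, and in the latter case runs through sub-cases 2(a), 2(b), 2(c): depending on the orientations inside $A$ one must split along $x_2$, along a vertex $x_3$ with $(x_3,x_2)\in E(D)$, or along a further vertex $x_4$ with $(x_4,x_3)\in E(D)$, and the needed containments $\mathcal{N}_D^{+}(x_2)\subseteq\mathcal{N}_D(x_3)$, $\mathcal{N}_D^{+}(x_3)\subseteq\mathcal{N}_D(x_4)$, etc., are extracted from induced-matching, induced $5$-cycle, and even induced $6$-cycle obstructions in $H=H(I(D)_{\ev 2})$ forced by chordality of $H^c$. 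None of this case analysis — the bulk of the paper's proof — is present in your proposal, so as it stands the argument has a genuine gap rather than being a routine adaptation of the bipartite case.
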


\begin{proof}
    $(2)\implies (3) \implies (1)$ is clear. Thus, it is enough to show $(1)\implies (2)$.\par  
   \noindent $(1)\implies (2)$: We proceed by induction on the number of vertices $n$ of $D$. We may assume $n\geq 2$ and $D$ is not an empty graph as in the case of an empty graph, $I(D)=0$ trivially follows the equivalent statements. If $n=2$, then $I(D)$ has only one minimal monomial generator. Thus, by Definition \ref{spl1}, the result follows. Let $n>2$. Given that $G$ is chordal and by Theorem \ref{thmcochordal}, we have $G^c$ is chordal. Thus, $G$ is a split graph, i.e., we can partition $V(G)$ as $V(G)=A\cup B$, where $G[A]$ is a clique and $B$ is an independent set of $G$. Let $A=\{x_1,\ldots, x_r\}$ and $B=\{y_1,\ldots,y_s\}$. Note that $\mathcal{N}_{G}(x_i)$ is a minimal vertex cover of $G$ for all $1\leq i\leq r$. Also by Corollary \ref{inducedcomponent}, there exists a vertex $x\in A$ such that $w(x)=1$. Without loss of generality, let us assume $x=x_1$. By Corollary \ref{inducedcomponent}, $D_2$ and $D_4$ can not be a weighted oriented induced subgraph of $D$, we have $\vert \mathcal{N}_{D}^{+}(x_1)\cap V^{+}\vert\leq 1$. We may skip the case when $\vert \mathcal{N}_{D}^{+}(x_1)\cap V^{+}\vert= 0$, because, in this case, writing $I(D)=x_1(\mathcal{N}_{D}(x_1))+I(D\setminus x_1)$ we can easily observe that $I(D)$ is vertex splittable. Thus, we assume $\vert \mathcal{N}_{D}^{+}(x_1)\cap V^{+}\vert= 1$. If $\mathcal{N}_{D}^{+}(x_1)\cap V^{+}=\{y_i\}$ for some $1\leq i\leq s$, then we can write
    $$I(D)=x_{1}\left(y_{i}^{w(y_{i})},\mathcal{N}_{D}(x_1)\setminus \{y_i\}\right)+I(D\setminus x_1).$$
    Clearly, $A\setminus\{x_1\} \subseteq \mathcal{N}_{D}(x_1)\setminus \{y_i\}$ is a vertex cover of $G\setminus x_1$  and thus, we have $I(D\setminus x_1)\subseteq (y_{i}^{w(y_{i})},\mathcal{N}_{D}(x_1)\setminus \{y_i\})$. Therefore, by induction hypothesis and Lemma \ref{lemspl3}, it follows that $I(D)$ is vertex splittable. If $\mathcal{N}_{D}^{+}(x_1)\cap V^{+} \cap B=\emptyset$, then without loss of generality we may assume $\mathcal{N}_{D}^{+}(x_1)\cap V^{+}=\{x_2\}$. Now we will take care of two possible cases separately:\par
    
    \noindent \textbf{\underline{Case-1:}} Suppose $N_{D}^{+}(x_2)\subseteq {N}_{D}(x_1)$. Then we can write
    $$I(D)=x_{1}\left(x_{2}^{w(x_{2})},\mathcal{N}_{D}(x_1)\setminus \{x_2\}\right)+I(D\setminus x_1).$$
    From our assumption, it is clear that as $I(D\setminus x_1)\subseteq (x_{2}^{w(x_{2})},\mathcal{N}_{D}(x_1)\setminus \{x_2\})$. Therefore, by induction hypothesis and $(x_{2}^{w(x_{2})},\mathcal{N}_{D}(x_1)\setminus \{x_2\})$ being vertex splittable by Lemma \ref{lemspl3}, it follows that $I(D)$ is vertex splittable.\par

    \noindent \textbf{\underline{Case-2:}} Suppose $\mathcal{N}_{D}^{+}(x_2)\not\subseteq \mathcal{N}_{D}(x_1)$. Then there exists $y\in \mathcal{N}_{D}^{+}(x_2)\cap B$ such that $y\not\in \mathcal{N}_{D}(x_1)$. Since $w(x_2)>1$ and by Corollary \ref{inducedcomponent}, $D_1$ can not be an induced subgraph of $D$, we have $w(y)=1$. Now we will take three possible sub-cases: \par 
    \noindent \textbf{\underline{Case-2(a):}} Suppose $x_{2}x_{3},\ldots,x_{2}x_{r}\in I(D)$. If there exists $y'\in \mathcal{N}_{G}(x_1)\cap B$ such that $y'\not\in\mathcal{N}_{G}(x_2)$ then $x_1y'\in I(D)$ as $D_2$ can not be an induced subgraph of $D$. Thus, $\{x_1,y'\}$ and $\{x_2,y\}$ will form an induced matching in a subgraph $H$ of $G$ such that $I(H)=I(D)_{\ev 2}$. This implies that $G^c$ has a cycle of length four which is a contradiction by Theorem \ref{thmcochordal}. Therefore, $\mathcal{N}_{G}(x_1)\subseteq \mathcal{N}_{H}[x_2]$ and hence, we can write
    $$I(D)=x_2\left(\{zx_{2}^{w(x_2)-1}\mid z\in\mathcal{N}_{D}^{-}(x_2)\}\cup \mathcal{N}_{D}^{+}(x_2)\right)+I(D\setminus x_2),$$
    where $I(D\setminus x_2)\subseteq \left(\{zx_{2}^{w(x_2)-1}\mid z\in\mathcal{N}_{D}^{-}(x_2)\}\cup \mathcal{N}_{D}^{+}(x_2)\right)$. By induction hypothesis and Lemma \ref{lemspl3}, we get $I(D)$ is vertex splittable.\par 
    \noindent \textbf{\underline{Case-2(b):}} Suppose there exists $x'\in A$ such that $(x',x_2)\in E(D)$. Without loss of generality, let $x'=x_3$. Then by Corollary \ref{inducedcomponent}, we have $w(x_3)=1$. Similar like $x_1$, we have $\vert \mathcal{N}_{D}^{+}(x_3)\cap V^{+}\vert\leq 1$. Now, consider the graph $H=H(I(D)_{\ev 2})$. Since $\{x_2,y\},\{x_1,x_3\}\in E(H)$ and $\{y,x_1\},\{x_1,x_2\},\{x_2,x_3\}\not\in E(H)$ and $H^c$ is chordal, we have $y\in\mathcal{N}_{H}(x_3)$. Suppose, without loss of generality, there exists $y_1\in\mathcal{N}_{D}^{+}(x_2)\cap B$ such that $y_1\not\in \mathcal{N}_{D}(x_3)$. Then either $\{x_2,y_1\}$ and $\{x_1,x_3\}$ will form an induced matching in $H$ or $H[\{x_1,x_2,x_3,y,y_1\}]$ will be an induced $5$-cycle of $H$. In both cases, we get contradictions as $H^c$ is chordal. Thus, $\mathcal{N}_{D}^{+}(x_2)\subseteq \mathcal{N}_{D}(x_3)$ and we have 
    $$I(D)=x_3\left(x_{2}^{w(x_{2})}, \mathcal{N}_{D}(x_3)\setminus \{x_2\}\right)+I(D\setminus x_3),$$
    where $I(D\setminus x_3)\subseteq \left(x_{2}^{w(x_{2})}, \mathcal{N}_{D}(x_3)\setminus \{x_2\}\right)$. Hence, by induction and Lemma \ref{lemspl3}, $I(D)$ is vertex splittable.\par 
    \noindent \textbf{\underline{Case-2(c):}} Suppose there exists $x'\in A$ such that $(x_2,x')\in E(D)$ and $w(x')>1$. Without loss of generality, let us assume $x'=x_3$. Then due to Corollary \ref{inducedcomponent}, we have $\mathcal{N}_{D}^{-}(x_2)\subseteq \mathcal{N}_{D}^{+}(x_3)$, $\mathcal{N}_{D}(x_2)\cap V^{+}=\{x_3\}$. Let $y'\in \mathcal{N}_{D}^{+}(x_2)\cap B$ such that $y'\not\in \mathcal{N}_{D}^{+}(x_3)$. Then either $\{x_2,y'\}$ and $\{x_1,x_3\}$ will form an induced matching in $H$ or $H[\{x_1,x_2,x_3,y,y'\}]$ will be an induced $5$-cycle of $H$. In both cases, we get contradictions as $H^c$ is chordal. Thus, $\mathcal{N}_{D}(x_2)\setminus \{x_3\}\subseteq \mathcal{N}_{D}^{+}(x_3)$. If $x_3x_1,x_3x_4,\ldots,x_3x_r\in I(D)$, then we have
    $$I(D)=x_3\left(\{zx_{3}^{w(x_{3})-1}\mid z\in\mathcal{N}_{D}^{-}(x_3)\}\cup \mathcal{N}_{D}^{+}(x_3)\right)+I(D\setminus x_3)$$
    such that $I(D\setminus x_3)\subseteq (\{zx_{3}^{w(x_{3})-1}\mid z\in\mathcal{N}_{D}^{-}(x_3)\}\cup \mathcal{N}_{D}^{+}(x_3))$. By induction hypothesis and Lemma \ref{lemspl3}, we get $I(D)$ is vertex splittable. Now, if $(x_3,z)\in E(D)$ for some $z\in V^{+}$, then by looking at $G[\{x_2,x_3,z\}]$, we get a contradiction from Corollary \ref{inducedcomponent}. Therefore, the only case left is when there is some $x_i\in A\setminus \{x_2\}$ such that $(x_i,x_3)\in E(D)$. Note that $x_i=x_1$ is not possible by Corollary \ref{inducedcomponent}, and so, without loss of generality, we assume $x_i=x_4$. Similar like $x_1$, we have $\mathcal{N}_{D}^{+}(x_4)\cap V^{+}=\{x_3\}$. Suppose there exists $y'\in \mathcal{N}_{D}^{+}(x_3)\cap B$ such that $y'\not\in \mathcal{N}_{D}(x_4)$. Since $w(x_3)>1$ and $D_1$ can not be an induced subgraph of $D$, we have $w(y')=1$. Now let us focus on the graph $H=H(I(D)_{\ev 2})$. Since $H$ can not have any induced $5$-cycle and no two edges of $H$ can form an induced matching, we should have $\{y,x_3\},\{y,x_4\},\{y',x_1\},\{y',x_2\}\in E(H)$. Let $L=H[\{x_1,x_2,x_3,x_4,y,y'\}]$. Then, carefully looking at the graph $L$, we observe that $L^c$ is a $6$-cycle, which gives a contradiction as $H^c$ is chordal. Therefore, we have $\mathcal{N}_{D}^{+}(x_3)\subseteq \mathcal{N}_{D}(x_4)$ and we can write
    $$I(D)=x_4\left(x_{3}^{w(x_3)}, \mathcal{N}_{D}(x_4)\setminus \{x_3\}\right)+I(D\setminus x_4).$$
    From the above discussion it is clear that $I(D\setminus x_4)\subseteq (x_{3}^{w(x_3)}, \mathcal{N}_{D}(x_4)\setminus \{x_3\})$. Hence, by induction hypothesis and Lemma \ref{lemspl3}, $I(D)$ is vertex splittable.
\end{proof}

From Theorem \ref{thmsink}, \ref{thmcardinality}, \ref{thmbipartite} and \ref{thmchordal}, we observe that for some large classes of weighted oriented graphs, componentwise linearity, linear quotient property, and vertex splitting become equivalent.  Furthermore, through extensive computational analysis, we have not encountered any weighted oriented graph where these three properties are not equivalent. Consequently, we hold a strong belief that these three properties will be equivalent for the edge ideals of any weighted oriented graph. Hence, we propose the following conjecture.

\begin{conjecture}\label{conjcl}
Let $D$ be a weighted oriented graph. Then the following are equivalent:
\begin{enumerate}
    \item $I(D)$ is componentwise linear;
    \item $I(D)$ has linear quotient property;
    \item $I(D)$ is vertex splittable ideal.
\end{enumerate}
\end{conjecture}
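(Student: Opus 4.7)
The implications $(3)\Rightarrow(2)\Rightarrow(1)$ are standard (see \cite[Theorem 8.2.15]{hh11} and \cite[Theorem 2.4]{mka16}), so the only direction requiring argument is $(1)\Rightarrow(3)$. My plan is to mimic the strategy used for the chordal, bipartite, $V^+$-sinks, and $|V^+|\le 1$ cases and push it through in full generality by induction on $|V(D)|$, producing at each step either a splitting variable in the sense of Definition \ref{spl1}, or an auxiliary vertex that plays the same role.

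The setup would be as follows. Suppose $I(D)$ is componentwise linear, with underlying graph $G$. By Theorem \ref{thmcochordal}, $G$ is co-chordal, and because $I(D)_{\ev{2}}$ has a $2$-linear resolution, Fr\"{o}berg's theorem forces the auxiliary graph $H := H(I(D)_{\ev{2}})$ to also be co-chordal. More generally, each $I(D)_{\ev{d}}$ has a $d$-linear resolution, which places analogous constraints on every ``layer'' of $D$. In addition, by Corollary \ref{inducedcomponent} none of $D_1,D_2,D_3,D_4$ of Figure \ref{fig1} can occur as an induced weighted oriented subgraph of $D$; this heavily restricts how vertices of $V^+$ can interact with each other and with the orientation.

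The splitting step would proceed as follows. By Remark \ref{remadjacent} applied to $G^c$, there exist adjacent $x,x'\in V(G)$ whose $G$-neighbourhoods are both minimal vertex covers of $G$, and using the exclusion of $D_1$ from Corollary \ref{inducedcomponent} at least one of them, say $x$, may be assumed to have $w(x)=1$. In the \emph{easy case}, where $\mathcal{N}_D^+(x)\cap V^+$ consists only of sink vertices, we may write
\[
I(D) = x\bigl(y_1^{w(y_1)},\ldots,y_r^{w(y_r)},z_1,\ldots,z_s\bigr) + I(D\setminus x),
\]
where $y_i\in\mathcal{N}_D^+(x)$ and $z_j\in\mathcal{N}_D^-(x)$; the parenthesised ideal is vertex splittable by Lemma \ref{lemspl3}, the containment $I(D\setminus x)\subseteq(\cdots)$ holds since $\mathcal{N}_G(x)$ is a minimal vertex cover, and the induction hypothesis applies to $I(D\setminus x)$ which is componentwise linear by Proposition \ref{propcl}. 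In the \emph{hard case}, there exists $x_1\in\mathcal{N}_D^+(x)\cap V^+$ that is not a sink, so we may pick $z\in\mathcal{N}_D^+(x_1)$. The plan is to locate an alternative splitting vertex -- either $x_1$ itself, or $z$, or a suitable neighbour -- whose neighbourhood is forced to absorb enough of $\mathcal{N}_D(x)$ by combining the two co-chordality constraints on $G^c$ and $H^c$ with the $D_1$--$D_4$ exclusions. This is exactly the maneuver executed in the bipartite and chordal proofs in their restricted settings.

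The main obstacle is the case analysis in the hard step. In the bipartite and chordal proofs, extra global structure eliminates many configurations automatically: bipartiteness pins down $w(z)=1$ at once via the absence of $D_1$, and chordality of $G$ combined with chordality of $G^c$ yields a split graph and reduces the problem to a clique-plus-independent-set picture (via Theorem \ref{thmsplit}). Without such structure one must argue purely from the induced $C_4$- and $C_{\ge 5}$-free conditions on $G^c$ and $H^c$, together with the absence of $D_1$--$D_4$, and the combinatorics appears to explode rapidly. A promising line is to choose a simplicial vertex $u$ of $G^c$ whose $G$-neighbourhood is a minimal vertex cover and, among all such $u$, pick one extremal with respect to a weight-based functional; the extremality should force precisely the neighbourhood structure needed for a decomposition $I(D) = u\cdot J + I(D\setminus u)$ with $J$ vertex splittable and $I(D\setminus u)\subseteq J$. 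Another avenue is the polarization trick of Lemma \ref{compntpolariz}, which transfers the question to a squarefree edge ideal of an associated hypergraph where combinatorial vertex-splittability criteria are better developed. The delicate point in either approach will be verifying the containment $I(D\setminus u)\subseteq J$ in the cases where $\mathcal{N}_G(u)$ is not itself a minimal vertex cover of $G$.
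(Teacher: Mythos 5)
The statement you are addressing is not proved in the paper at all: it appears as Conjecture \ref{conjcl}, supported only by the special cases established in Theorems \ref{thmsink}, \ref{thmcardinality}, \ref{thmbipartite} and \ref{thmchordal} and by computational evidence. So there is no ``paper proof'' to match, and your text, as written, is also not a proof --- it is a strategy outline whose decisive step is left open. Your implications $(3)\Rightarrow(2)\Rightarrow(1)$, the use of Theorem \ref{thmcochordal}, Fr\"{o}berg's theorem on $H(I(D)_{\ev{2}})$, Corollary \ref{inducedcomponent}, Remark \ref{remadjacent}, Proposition \ref{propcl} and Lemma \ref{lemspl3} faithfully reproduce the scaffolding the paper uses in its four special cases, and your ``easy case'' decomposition $I(D)=x\,J+I(D\setminus x)$ is sound there. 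But the entire content of the conjecture lives in your ``hard case'': in each of the paper's theorems, the extra hypothesis (bipartite, chordal, $V^{+}$ sinks, $|V^{+}|\le 1$) is exactly what makes it possible to exhibit an alternative splitting vertex $z$ with $\mathcal{N}_{H}(z)$ large enough, and to verify the crucial containment $I(D\setminus z)\subseteq J$ together with $\mathcal{G}(I)=\mathcal{G}(xI_1)\sqcup\mathcal{G}(I_2)$. You acknowledge this yourself (``the combinatorics appears to explode rapidly''), and the two escape routes you propose are not carried out: the extremal-choice idea is not made precise enough to force the needed neighbourhood structure, and the polarization route has an additional unaddressed gap, since Lemma \ref{compntpolariz} only transfers componentwise linearity, while nothing in the paper (or in your sketch) shows that vertex splittability of $I(D)^{\PP}$ can be pulled back to, or pushed forward from, $I(D)$.

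A further point to be careful about if you pursue this: for $d\ge 3$ the components $I(D)_{\ev{d}}$ are no longer edge ideals of graphs, so ``analogous constraints on every layer'' cannot be extracted via Fr\"{o}berg's theorem as you suggest; only the $d=2$ layer yields the co-chordal graph $H$, and the higher layers would need genuinely new arguments. In short, your proposal correctly identifies the known reductions and the shape a proof should take, but the step that would upgrade the paper's four conditional theorems to the unconditional statement is missing --- which is precisely why the paper states it as a conjecture rather than a theorem.
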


\noindent
{\bf Acknowledgement:} 
Manohar Kumar is thankful to the Government of India for supporting him in this work through the Prime Minister Research Fellowship. Kamalesh Saha would like to thank the National Board for Higher Mathematics (India)
for the financial support through NBHM Postdoctoral Fellowship.

\end{document}